\newcommand{\event}[1]{\left[\, #1 \,\right]}
\newcommand{\E}[1]{\mathbb{E}\event{#1}}
\newcommand{\Ec}[1]{\mathbb{E}[#1]}
\newcommand{\bc}{{\mathbf c}}
\DeclareMathOperator{\var}{var}
\def \nk{{\sf n}_\kappa}
\def \ben{\begin{eqnarray}}
\def \een{\end{eqnarray}}
\def \be{\begin{eqnarray*}}
\def \be{\begin{eqnarray*}}
\def \bq{\begin{equation}}
\def \eq{\end{equation}}
\def \ee{\end{eqnarray*}}
\newcommand{\eqdist}{\ensuremath{\stackrel{d}{=}}}
\def \ee{\end{eqnarray*}}
\def \1{{\bf 1}}
\def \Cont{{\sf Cont}}
\def \Forests{{\mathbb F}}
\def \F{{\sf F}}
\def \I{{\bf 1}}
\def \LL{\L ukasiewicz }
\def \LR{{\sf LR}}
\def \R{{\sf R}}
\def \Trees{\mathbb T}
\def \bM{{\bf M}}
\def \bm{{\bf m}}
\def \bp{{\bf p}}
\def \bp{{\bf p}}
\def \bs{{\bf s}}
\def \bt{{\bf t}}
\def \build#1#2#3{\mathrel{\mathop{\kern 0pt#1}\limits_{#2}^{#3}}}
\def \soussur#1#2#3{\mathrel{\mathop{\kern 0pt#1}\limits_{#2}^{#3}}}
\def \sursous#1#2#3{\mathrel{\mathop{\kern 0pt#1}\limits^{#2}_{#3}}}
\def \bu{{\bf u}}
\def \cro#1{\llbracket#1\rrbracket}
\def \dd{\xrightarrow[n]{(d)}}
\def \eref#1{(\ref{#1})}
\def \floor#1{\lfloor#1\rfloor}
\def \imp{\Rightarrow}
\def \l{\left}
\def \r{\right}
\def \se{{\sf e}}
\def \sb{{\sf b}}
\def \sous#1#2{\mathrel{\mathop{\kern 0pt#1}\limits_{#2}}}
\def \sur#1#2{\mathrel{\mathop{\kern 0pt#1}\limits^{#2}}}
\def \st2#1#2{\left\{\!\!\!
	\begin{array}{c}#1\\#2
        \end{array}\!\!\!
	\right\}}
\newcommand{\bC}{{\bf{C}}}
\title{\bf  Asymptotics of trees with a prescribed degree sequence and applications}
\author{Nicolas Broutin\thanks{Projet Algorithms, Inria Rocquencourt, Domaine de Voluceau, 78153 Le Chesnay - France. Email: nicolas.broutin@inria.fr. Partially supported by the grant ANR-09-BLAN-0011 Boole.} \and Jean-Fran\c{c}ois Marckert\thanks{LaBRI, Universit\'e de Bordeaux - CNRS, 351 cours de la Lib\'eration, 33405 Talence - France. Email: marckert@labri.fr. Partially supported by the grant ANR-08-BLAN-0190-04 A3.}}
\begin{document}

\renewcommand{\baselinestretch}{1.2}
\newtheorem{exe}{Exercise}
\newtheorem{fig}{\hspace{1cm} Figure}
\newtheorem{lem}{Lemma}
\newtheorem{conj}{Conjecture}
\newtheorem{defi}{Definition}
\newtheorem{pro}[lem]{Proposition}
\newtheorem{theo}[lem]{Theorem}
\newtheorem{cor}[lem]{Corollary}
\newtheorem{remi}{Remark\rm}{\rm}
\newtheorem{com}{Comments\rm}{\rm}
\newenvironment{rem}%
{\begin{center}\begin{minipage}{16cm}\begin{remi}}%
{\end{remi}\end{minipage}\end{center}}
\newcounter{aaa}
\def \ds{{\bf s}}
\def \wh#1{\widehat{#1}}
\newtheorem{note}{Note \rm}{\rm}

\maketitle

\begin{abstract}Let $t$ be a rooted tree and $n_i(t)$ the number of nodes in $t$ having $i$ children. The degree sequence $(n_i(t),i\geq 0)$ of $t$ satisfies  $\sum_{i\ge 0} n_i(t)=1+\sum_{i\ge 0} in_i(t)=|t|$, where $|t|$ denotes the number of nodes in $t$.
In this paper, we consider trees sampled uniformly among all { plane} trees having the same degree sequence $\ds$; we write $`P_\ds$ for the corresponding distribution. Let $\ds(\kappa)=(n_i(\kappa),i\geq 0)$ be a list of degree sequences indexed by $\kappa$ corresponding to trees with size $\nk\to+\infty$. We show that under some simple and natural hypotheses on $(\ds(\kappa),\kappa>0)$ the trees sampled under $`P_{\ds(\kappa)}$ converge to the Brownian continuum random tree after normalisation by $\nk^{1/2}$. Some applications concerning Galton--Watson trees and coalescence processes are provided.  
\end{abstract}

\section{Introduction}\label{sec:intro}

 Let $t$ be a rooted tree and $n_i(t)$ the number of nodes in $t$ having $i$ children. The sequence $(n_i(t),i\geq 0)$ is called the degree sequence of $t$, and satisfies  $ \sum_{i\ge 0} n_i(t)=1+\sum_{i\ge 0} in_i(t)=|t|$, the number of nodes in $t$. 

The aim of this paper is to study trees chosen under $`P_{\ds}$, the uniform distribution on the set of { plane} trees with specified degree sequence $\ds=(n_i,i\geq 0)$, and then size $|\ds|:=\sum_{i\ge 0} n_i$. More precisely, a sequence of degree sequences $(\ds(\kappa),\kappa \geq 0)$ with $\ds(\kappa)=(n_i(\kappa),i\geq 0)$, corresponding to trees with size $\nk:=|\ds(\kappa)|\to +\infty$ is given, and the investigations concern the limiting behaviour of tree under $`P_{\ds(\kappa)}$. 
\begin{figure}[htb]
\centerline{\includegraphics[height=1.6cm]{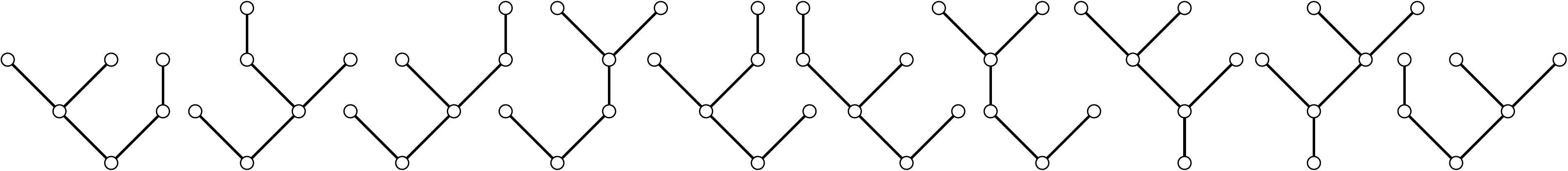}}
\caption{The 10 trees of $\Trees_\ds$ for the degree sequence $\ds=(3,1,2,0,0,\dots)$.}
\end{figure}

We now introduce some notation valid { in the entire paper}. We denote by $\bp(\kappa)=(p_i(\kappa),i\geq 0)$ the degree distribution under $`P_{\ds(\kappa)}$:
\bq\label{eq:pik}
p_i(\kappa)=\frac{n_i(\kappa)}{\nk}.
\eq
{ Let also
\bq\label{eq:sigma}
\sigma_\kappa^2:=\sum_{i\ge 1} \frac{n_i(\kappa)}{\nk-1} i^2 - 1;
\eq 
$\sigma_\kappa^2$ is ``almost'' the associated variance, this choice of definition yields shorter formulae in the following.} The maximum degree of any tree with degree sequence $\ds(\kappa)$ is 
\[\Delta_\kappa=\max\{i:n_i(\kappa)>0\}.\]
{ Throughout} the paper $\bp=(p_i,i\geq 0)$ is a distribution with mean 1, and  variance $\sigma^2_\bp\in(0,+\infty)=\sum_{i\ge 0} i^2p_i -1\in (0,\infty).$ In the following theorem, which is the main result of the present paper,  $\bp(\kappa)\imp \bp$ means equivalence in distribution, which here means that for any $i\geq 0$, $p_i(\kappa)\to p_i$, as $\kappa\to \infty$.
\begin{theo}\label{thm:main_gh} 
Let $(\ds(\kappa),\kappa\geq 0)$ be a sequence of degree sequences such that $\nk\to +\infty$, $\Delta_\kappa=o(\nk^{1/2})$,  $\bp(\kappa)\imp \bp$ with  $\sigma_\kappa^2\to \sigma^2_\bp$, that is convergence of second moment. Let $\bt$ be a plane tree chosen under $`P_{\ds(\kappa)}$ and let $d_{\bt}$ be the graph distance in $\bt$. Under $`P_{\ds(\kappa)}$, when $\kappa\to +\infty$, $(\bt,\sigma_\kappa \nk^{-1/2} d_{\bt})$ converges in distribution to Aldous' continuum random tree (encoded by twice a Brownian excursion), in the Gromov--Hausdorff sense.
\end{theo}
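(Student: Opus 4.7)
The plan is to encode $\bt$ via its \LL walk $W$ and depth-first height process $H$, prove that both --- after appropriate scaling --- converge to encodings of the Brownian CRT, and then transfer the convergence to $(\bt,\sigma_\kappa \nk^{-1/2}d_\bt)$ using the standard continuity of the Gromov--Hausdorff topology with respect to height encodings. Label the vertices of $\bt$ in depth-first order $v_0,\dots,v_{\nk-1}$; let $c(v_i)$ be the number of children of $v_i$; set $W(0)=0$, $W(k+1)=W(k)+c(v_k)-1$; and let $H(k)$ be the depth of $v_k$. The tree metric is recovered from $H$ via $d_\bt(v_i,v_j)=H(i)+H(j)-2\min_{i\wedge j\le \ell\le i\vee j}H(\ell)$. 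Moreover, under $`P_{\ds(\kappa)}$ one may generate the sequence $(c(v_0),\dots,c(v_{\nk-1}))$ as follows: take a uniform random permutation $\xi_1,\dots,\xi_{\nk}$ of the multiset containing $n_i(\kappa)$ copies of $i$ for each $i\ge 0$, form $S(k)=\sum_{j\le k}(\xi_j-1)$, and extract (via the cycle lemma) the unique cyclic shift making $S$ strictly positive on $\{1,\dots,\nk-1\}$. That shift is the \LL walk of a tree with law $`P_{\ds(\kappa)}$.

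Since $\sum_i n_i(\kappa)(i-1)=-1$, the walk $S$ is a bridge of length $\nk$ with exchangeable increments (sampling without replacement). The hypothesis $\Delta_\kappa=o(\sqrt{\nk})$ supplies a Lindeberg-type bound and, together with $\sigma_\kappa^2\to \sigma_\bp^2$, the classical invariance principle for sampling without replacement yields
\[
\left(\frac{S(\lfloor \nk u\rfloor)}{\sigma_\kappa \sqrt{\nk}}\right)_{u\in[0,1]}\xrightarrow[\kappa\to\infty]{(d)} \bigl(B^{\mathrm{br}}(u)\bigr)_{u\in[0,1]},
\]
a standard Brownian bridge. Because the Vervaat transform is a.s.\ continuous at Brownian bridge paths (unique minimum), applying it to both sides converts this into the convergence of $\sigma_\kappa^{-1}\nk^{-1/2}W(\lfloor \nk u\rfloor)$ to a standard Brownian excursion $e$.

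The main difficulty --- the technical heart of the proof --- is to control $H$ in terms of $W$. Starting from the identity $H(k)=\#\{j<k:W(j)=\min_{j\le \ell\le k}W(\ell)\}$, the goal is to establish a Marckert--Mokkadem-type bound
\[
\sup_{k\le \nk}\left|\,\sigma_\kappa\, H(k)-\tfrac{2}{\sigma_\kappa}\,W(k)\,\right|=o_{`P}(\sqrt{\nk}).
\]
For Galton--Watson trees such bounds rely on second-moment computations exploiting independence of the increments; here the analogous quantities must be controlled for a permutation of a fixed multiset. The genuinely new ingredient is a sharp concentration estimate for the number of weak ladder epochs of $S$ inside mesoscopic windows of length $O(\sqrt{\nk})$, valid in the sampling-without-replacement setting; this is where the bound $\Delta_\kappa=o(\sqrt{\nk})$ is expected to re-enter substantially, on top of its Lindeberg role in the invariance principle.

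Once this height-walk comparison is in place, the convergence of $(\sigma_\kappa \nk^{-1/2}H(\lfloor \nk u\rfloor))_{u\in[0,1]}$ to $(2e(u))_{u\in[0,1]}$ in the uniform topology follows from the convergence of $W$ already obtained. The metric encoding displayed above, together with uniform convergence of the rescaled height process to $2e$, then delivers the announced Gromov--Hausdorff convergence of $(\bt,\sigma_\kappa \nk^{-1/2}d_\bt)$ to the CRT coded by $2e$, via the standard correspondence between convergence of height encodings and Gromov--Hausdorff convergence of the associated $`R$-trees.
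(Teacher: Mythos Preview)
Your overall architecture --- convergence of the \LL walk via the exchangeable-bridge invariance principle and Vervaat's transform, then a height/walk comparison, then the standard passage from height-process convergence to Gromov--Hausdorff convergence --- matches the paper exactly. The first step is essentially the paper's Lemma~\ref{lem:exchan}.

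The substantive difference, and the gap, is in how you propose to obtain $\sup_k|\sigma_\kappa^2 H(k)/2 - W(k)|=o_{`P}(\sqrt{\nk})$. You invoke the ladder-epoch identity $H(k)=\#\{j<k:W(j)=\min_{j\le\ell\le k}W(\ell)\}$ and then ask for concentration of ladder-epoch counts in the sampling-without-replacement setting. You correctly flag this as ``the genuinely new ingredient'', but you do not supply it, and it is precisely the heart of the matter: the Marckert--Mokkadem argument for i.i.d.\ increments leans on the strong Markov property and renewal structure of ladder epochs, neither of which survives for an exchangeable bridge. Saying that $\Delta_\kappa=o(\sqrt{\nk})$ ``is expected to re-enter substantially'' is not a proof.

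The paper avoids ladder epochs altogether. It uses a combinatorial \emph{backbone decomposition}: for a uniformly marked vertex $\bu$, the branch composition $\bM(\bu,\bt)$ and the number $|\R(\bu,\bt)|$ of vertices to the right of the root-to-$\bu$ path have explicit laws (Proposition~\ref{pro:fond-comb}); conditionally on $\bM(\bu,\bt)=\bm$, $|\R(\bu,\bt)|$ is a sum of independent uniforms, whence concentration around $\sigma_\kappa^2|\bu|/2$ by elementary Chernoff bounds. This yields only a \emph{pointwise} comparison (Proposition~\ref{pro:compar}); the paper then proves tightness of the rescaled height process separately (Lemma~\ref{lem:tightness}, via a coupling that removes and reinserts the degree-one vertices) to upgrade to the uniform statement. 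The paper explicitly remarks that a direct ``every point'' bound of the sort you aim for may fail under the stated hypotheses, which is why the two-step route (pointwise comparison plus tightness) is taken.
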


First observe that the very strong result of \citet{HaMi2010} about the asymptotics of Markov branching trees that has been used to give asymptotics for random trees in a wide variety of settings does not apply in the present case of trees with a prescribed degree sequence. Indeed, the subtrees of a given node are not independent given their sizes when one fixes the degree sequence. Our approach uses instead the observation done by \citet{MA-MO} that all natural encodings of the trees are asymptotically proportional in the case of Galton-Watson trees conditioned by the size. The same property will also hold here. In particular, the \emph{height process} or the \emph{contour process} both encoding the metric structure of the tree resemble the \emph{depth-first queue process} encoding the sequence of degrees observed when performing a depth-first traversal. This fact was used by \citet{MA-MO} to give an alternative proof of Aldous' result in the case of Galton--Watson trees conditioned on the total progeny under some moment condition (\citet{BK} also observed this phenomenon). 

\medskip
 One of the crucial questions underlying our work is that of the universality of the convergence of random trees to the continuum random tree (CRT).

We are motivated by the metric structure of graphs with a prescribed degree sequence. Introduced by \citet{BeCa1978a} and by \citet{Bo1980a} in the form of the configuration model, these graphs have received a lot of attention since the first tight analysis of the size of connected components by \citet{MoRe1995,MoRe1998}. This is mainly because the model allows for a lot of flexibility in the degree sequence. In particular, the model provides a construction of random graphs with degree sequences that may match the observations in large real-world networks.

Of course, random graphs with a prescribed degree sequence are much more complex than trees with a prescribed degree sequence, but there is no doubt that the analysis of trees is a first step towards the identification of the metric structure of the corresponding graphs. Indeed, recent results of \citet{Joseph2010a} show that under some moment condition, the sizes of the connected components of  random graphs with a prescribed \emph{critical} degree sequence are similar to those of Erd\H{o}s--R\'enyi $G(n,p)$ random graphs \cite{ErRe1960,Bollobas2001,JaLuRu2000}: they may be asymptotically described in terms of the lengths of the excursions of a Brownian motion with parabolic drift above its current minimum, as demonstrated by \citet{Aldous1997}. (See also \cite{Riordan2011a}, where it is supposed that the maximum degree is bounded.) On the other hand, the metric structure of $G(n,p)$ inside the critical window has recently been identified in terms of modifications of Brownian CRT by \citet*{AdBrGo2010,AdBrGo2010a}. In other words, the present analysis is one more building block towards an invariance principle for scaling limits of random graphs, i.e., that critical random graphs with a prescribed degree sequence have (under a suitable moment condition on the degree distribution) the same scaling limit (as sequence of compact metric spaces) as classical random graphs \cite{AdBrGo2010a}. This is at least what is suggested by the results of \citet*{Hofstad2009,BhHoLe2009}, \citet{Joseph2010a} and \citet{Riordan2011a}. \par

	Moreover, in the same way that uniform random trees or forests may be seen as the results of coagulation/fragmentation processes involving particles \cite{Pitman1999b,Pitman2006}, trees with a prescribed degree sequence appear naturally in similar aggregation processes. The model where particles have constrained valence may appear more ``physically'' grounded. The relevant underlying coalescing procedure is the additive coalescent \cite{AlPi1998a, Bertoin2000a}, a Markov process whose dynamics are such that particles merge at a rate proportional to the sum of their masses/sizes. The additive coalescent is the aggregation process appearing in Knuth's modification of R\'enyi's parking problem \cite{Renyi1958,Hemmer1989} or the hashing with linear probing \cite{ChLo2002,BeMi2006}. The reader may find more information about coagulation/fragmentation processes in the monograph by \citet{Bertoin2006} or the recent survey by \citet{Berestycki2009}. \medskip

The model $`P_{\ds}$ is related to Galton--Watson trees \cite{AtNe1972, Harris1963}, also called simply generated trees in the combinatorial literature, by a simple conditioning: the distribution $`P_{\ds}$ coincides with the distribution of the family tree $\bt$ of a Galton--Watson process with offspring distribution $(\nu_i,i\geq 0)$ (which must satisfies $\nu_i>0$ if $n_i>0$) conditioned on $\{n_i(\bt)=n_i,i\geq 0\}$. { Indeed, $`P_{\ds}$ assigns the same probability to all trees with the same degree sequence.}
In this sense, the distribution $\nu$ plays a role of secondary importance, and $`P_{\ds}$ appears to be a model of combinatorial nature, { far from the world} of Galton--Watson processes.
Nevertheless, we will see that Theorem \ref{thm:main_gh} implies the following result of Aldous (stated in a slightly different form in \cite{Aldous1991}) (see also  \cite{Aldous1991,Aldous1991b,Aldous1993a, MA-MO, Legall1993}), { where $H_\bt$ is the height process of $\bt$ (the definition is recalled in the next section)}.
\begin{pro}[Aldous \cite{Aldous1991}]\label{pro:Ald} Let $\mu=(\mu_i,i\geq 0)$ be a distribution  with mean  $m_\mu=1$ and variance $\sigma^2_\mu\in(0,+\infty)$, and let $`P_{\mu}$ be the distribution of a Galton--Watson tree with offspring distribution $\mu$. Along the subsequence $\{n~: `P_\mu(|\bt|=n)>0\}$, under $`P_\mu(~\cdot~|\,|\bt|=n)$ 
\[\l(\frac{H_\bt(nx)}{\sqrt{n}}\r)_{x\in[0,1]}\xrightarrow[n\to\infty]{(law)} \frac{2}{\sigma_\mu} \se\]
where $\se$ denotes a standard Brownian excursion, the convergence holding in the space ${\mathcal C}[0,1]$ equipped with the topology of uniform convergence. 
\end{pro}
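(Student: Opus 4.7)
Under $`P_\mu$, a plane tree $t$ of size $n$ receives weight $\prod_{i\ge 0}\mu_i^{n_i(t)}$, which depends only on the degree sequence $\ds(t)$. Hence, conditionally on $\ds(\bt)$, a tree distributed as $`P_\mu(\cdot\mid|\bt|=n)$ is uniform on the set of plane trees with degree sequence $\ds(\bt)$, i.e., has law $`P_{\ds(\bt)}$. It therefore suffices to verify that, with probability tending to $1$ under $`P_\mu(\cdot\mid|\bt|=n)$, the random degree sequence $\ds(\bt)$ satisfies the three hypotheses of Theorem~\ref{thm:main_gh}: (i) $n_i(\bt)/n\to\mu_i$ for each fixed $i$; (ii) the renormalised second moment of $\ds(\bt)$ converges to $\sigma_\mu^2$; (iii) $\Delta(\bt)=o(\sqrt n)$. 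Applying Theorem~\ref{thm:main_gh} on the event that (i)--(iii) hold (conditionally on $\ds(\bt)$) then yields the conclusion.

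\textbf{Bridge analysis of $\ds(\bt)$.} By the cycle lemma (equivalently, via the Lukasiewicz encoding of plane trees), the degree sequence $\ds(\bt)$ under $`P_\mu(\cdot\mid|\bt|=n)$ is distributed as the type profile of an i.i.d.\ sample $(X_1,\ldots,X_n)$ of law $\mu$, conditioned on $S_n:=\sum_{k=1}^n(X_k-1)=-1$. Since $\mu$ has mean $1$ and finite variance, Gnedenko's local CLT (along the relevant subsequence of $n$'s) yields $`P(S_n=-1)\asymp n^{-1/2}$ together with the uniform upper bound $\sup_y `P(S'_{n-1}=y)\le C/\sqrt n$, where $S'_{n-1}=\sum_{k=2}^n(X_k-1)$. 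Combining these gives the one-coordinate comparison $`P(X_1\in A\mid S_n=-1)\le C'\,`P(X_1\in A)$ for every Borel $A$, and a union bound delivers (iii):
\[`P\bigl(\max_{k\le n}X_k\ge\varepsilon\sqrt n \,\big|\, S_n=-1\bigr) \le C'n\,`P(X_1\ge\varepsilon\sqrt n) \le \frac{C'}{\varepsilon^2}\,\E{X_1^2\,\I_{X_1\ge\varepsilon\sqrt n}} \tend 0.\]
The same type of local-CLT comparison, applied on two coordinates, controls the conditional second moments of $n_i(\bt)=\sum_k \I_{X_k=i}$ and of $\sum_k X_k^2$, so the unconditional weak laws of large numbers transfer to the bridge and yield (i) and (ii).

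\textbf{Conclusion and main obstacle.} Once (i)--(iii) hold in probability, Theorem~\ref{thm:main_gh} applies along $\ds(\bt)$ with $\nk=n$ and $\sigma_\kappa\to\sigma_\mu$, giving Gromov--Hausdorff convergence of the rescaled tree to the CRT. The $C[0,1]$-convergence of the height process in Proposition~\ref{pro:Ald} is strictly finer than Gromov--Hausdorff convergence; to obtain it, I would rely on the fact that the proof of Theorem~\ref{thm:main_gh} in fact proceeds by establishing convergence of the height and contour processes in $C[0,1]$, the Gromov--Hausdorff statement being a corollary of that. The principal technical obstacle in the above plan is point (iii): the unconditional maximum-degree bound loses a factor $\sqrt n$ when divided by $`P(S_n=-1)\asymp n^{-1/2}$, so the local-CLT sharpening of the one-coordinate comparison is essential. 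A subtler form of the same issue is behind (ii), since $X_1^2$ is only in $L^1$ under $\mu$, and one must rule out a single large value of $X_k$ contributing macroscopically to $\sum_k X_k^2$ after conditioning.
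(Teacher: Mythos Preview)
Your proposal is correct and follows essentially the same route as the paper: condition on the degree sequence, reduce to verifying the hypotheses of Theorem~\ref{thm:main_gh} (in fact Theorem~\ref{thm:main_encodings}) via a bridge/local-CLT comparison, and observe that conditionally on $\ds(\bt)$ the tree is $`P_{\ds(\bt)}$-distributed. The only notable difference is tactical: the paper establishes absolute continuity of the bridge law with respect to the unconditioned walk on the whole $\sigma$-field $\mathcal F_{\lfloor n/2\rfloor}$ (then uses exchangeability for the second half), which lets it transfer the unconditional law of large numbers for $\sum X_k^2$ and the empirical frequencies in one stroke; you instead use one- and two-coordinate comparisons and moment bounds. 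Both arguments rest on the same local-CLT input $\sup_y `P(S_m=y)\le C/\sqrt m$ and $`P(S_n=-1)\asymp n^{-1/2}$, and both handle the maximum-degree bound via $n\,`P(X_1\ge\varepsilon\sqrt n)\to 0$ when $\Ec{X_1^2}<\infty$. Your remark that the $C[0,1]$ statement requires Theorem~\ref{thm:main_encodings} rather than the Gromov--Hausdorff Theorem~\ref{thm:main_gh} is well taken; the paper's text invokes Theorem~\ref{thm:main_gh} but the height-process convergence is indeed what is proved along the way. One point you leave implicit that the paper makes explicit is the passage from ``the random degree sequence satisfies (i)--(iii) in probability'' to ``Theorem~\ref{thm:main_encodings} applies'': the paper uses Skorokhod representation to upgrade to almost-sure convergence of $(\hat\mu,\hat\sigma^2,\hat\Delta/\sqrt n)$ on a common space, so that the deterministic hypotheses of the theorem hold a.s.\ along the realised sequence.
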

We will see that this theorem may be seen indeed as a consequence of Theorem \ref{thm:main_gh}; the argument morally relies  on the fact that under $`P_\mu(~.~|\,|\bt|=n)$, the empirical degree sequence satisfies the { hypotheses} of Theorem \ref{thm:main_gh} with probability going to 1 (this is stated in Lemma \ref{lem:GW}). The proof of this theorem is postponed { until} Section \ref{sec:pt}. 

{ Note also results of \citet{Rizzolo2011a} and \citet{Kort} that have a flavor similar to our Theorem~\ref{thm:main_gh} (although neither implies the other): they proved that Galton--Watson trees conditioned on the number of nodes having their degrees in a subset $A$ of the support of the measure $\mu$ has a limiting behaviour depending on $A$. For instance, they consider trees conditioned on the number of leaves, the number of nodes with other out-degrees being left free. The proofs in \citet{Rizzolo2011a} rely ultimately on the approach based on Markov branching trees developed by \citet{HaMi2010}.
}

\medskip
\noindent\textsc{Plan of the paper.}\ In Section~\ref{sec:tree} we introduce precisely the model of trees we consider. Section~\ref{sec:backbone} is devoted to a useful backbone decomposition for these trees. We then prove our main result, the convergence of rescaled trees to the continuum random trees, in Section~\ref{sec:proof}. Finally, the application to coagulation processes with particles with constrained valence is developed in Section~\ref{sec:coagulation}.

\section{Trees with prescribed degree sequence}
\label{sec:tree}
We here define formally the combinatorial object discussed in this paper.
For convenience we write $\mathbb{N}=\{1,2,\dots\}$ for the set of positive natural numbers. First recall some definitions related to standard rooted plane trees. Let $\mathcal U=\bigcup_{n\geq 0}\mathbb{N}^n$
be the set of finite words on the alphabet $\mathbb{N}$, where $\mathbb N^0=\{\varnothing\}$, and $\varnothing$ denotes the empty word. Denote by $uv$  the concatenation of $u$ and $v$; by convention ${\varnothing} u=u{\varnothing} =u$. 

A subset $T$ of $\mathcal U$ is a \it plane tree \rm (see Figure \ref{fig:pt}) if 
{ \begin{itemize}
	\item it contains $\varnothing$ (called the root),
	\item it is stable by prefix (if $uv\in T$ for $u$ and $v$ in $\mathcal U$, then $u\in T$), and 
	\item if ($uk\in T$ for some $k>1$ and $u\in U$) then $uj\in T$ for $j$ in $\{1,\dots,k\}$.
\end{itemize} }
This last condition appears necessary to get a unique tree with a given genealogical structure.
The set of plane trees will be denoted by $\Trees$.

\begin{figure}[ht]
\psfrag{v}{$\varnothing$}
\psfrag{1}{1}
\psfrag{11}{11}
\psfrag{12}{12}
\psfrag{13}{13}
\psfrag{14}{14}
\psfrag{15}{15}
\psfrag{131}{131}
\psfrag{151}{151}
\psfrag{152}{152}
\centerline{\includegraphics[height=3cm]{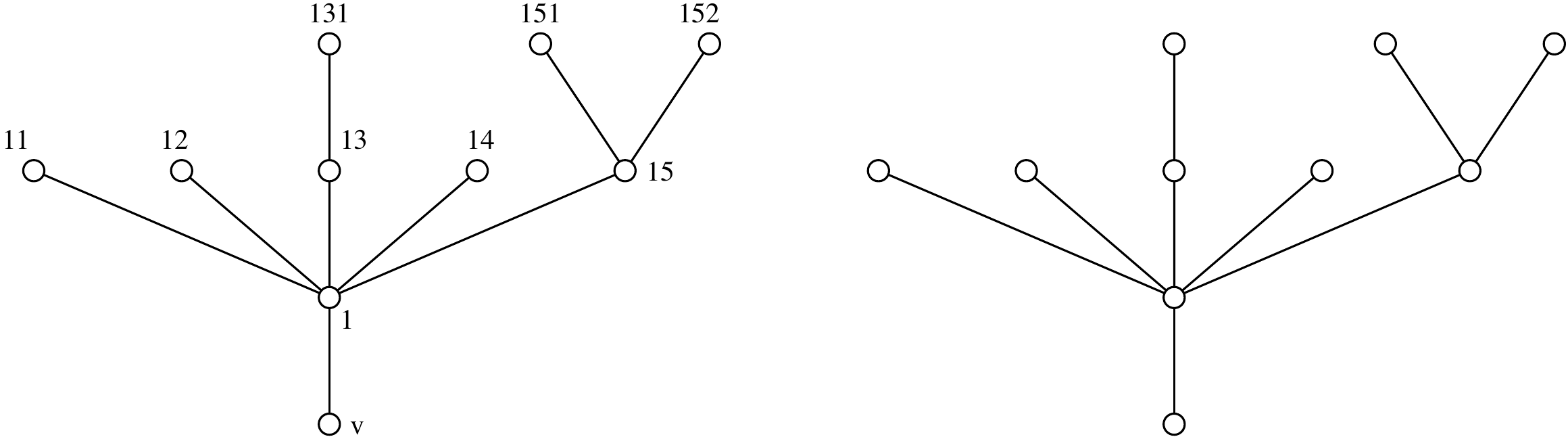}}
\caption{\label{fig:pt} Usual representation of the plane tree  $\{\varnothing,1,11,12,13,14,15,131,151,152\}$}
\end{figure}

Notice that the lexicographical order $<$ on $\mathcal U$, also named the depth-first order,  induces a total order on any tree $t$; this is of prime importance for the encodings of $t$ we will present.
For $t\in \Trees$, and $u\in t$, let $c_t(u)=\max\{i~: ui \in t\}$ be the number of children of $u$ in  $t$. The depth of $u$ in $t$, its number of letters as a word in $\mathcal U$, is denoted $|u|$. The notation $|t|$ refers to the cardinality of $t$, its number of nodes including the root $\varnothing$.
 
With a tree $t\in \Trees$, one can associate its degree sequence $\ds(t)=(n_i(t), i\ge 0)$, where $n_i(t)=\#\{u\in t: c_t(u)=i\}$ is the number of nodes with degree $i$ in $t$. For a fixed degree sequence $\ds$, write $\Trees_\ds$ for the set of trees $t\in \Trees$ such that $\ds(t)=\ds$, and let $`P_\ds$ be the uniform distribution on $\Trees_\ds$. To investigate the shape of random trees under $`P_\ds$, we will use the usual encodings: \emph{height process} $H$  and \emph{depth-first walk} $S$ (or \LL path) { and \emph{contour process} $\bC$}. These encodings are defined by first fixing their values at the integral points, and then linear interpolation in between (See Figure~\ref{fig:thw}). For a tree $t\in \Trees$, let $\tilde u_1=\varnothing< \tilde u_2<\dots<\tilde u_{|t|}$ denote the nodes of $t$ sorted according to the lexicographic order. Then we define $H=H_t$ by $H(i)=|\tilde u_{i+1}|$, $S=S_t$ by $S_t(i)=\sum_{j=1}^i (c_t(\tilde u_j)-1)$; the process $H_t$ is defined on $[0,|t|-1]$ and $S_t$ on $[0,|t|]$. 
{ For the contour process $\bC_t$ of $t$, we need to define first a function $f_t:\{0,\dots,2(|t|-1)\}\mapsto t$ which can be regarded as a walk around $t$; first set $f_t(0)= \varnothing$, the root. For $i<2(|t|-1)$, given $f_t(i)=v$, $f_t(i+1) $ is $u$, the smallest child of $v$ (for the lexicographical order) absent from the list $\{f_t(0),\dots,f_t(i)\}$ , and the father of $v$ if no such $u$ exists. The contour process has the following values on integer positions 
\[\bC_t(i)=|f_t(i)|,~~i\in\{0,\dots,2(|t|-1)\}.\]}
\begin{figure}[ht]
\centerline{\includegraphics[height=3cm]{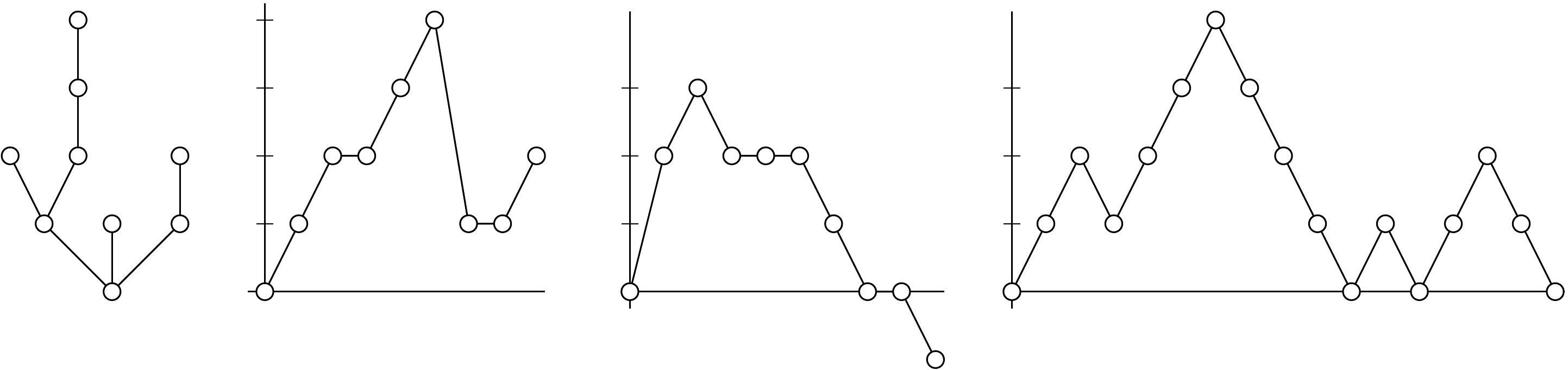}}
\caption{\label{fig:thw} A plane tree $t\in \Trees$, its height process $H_t$, \LL walk $S_t$ and its contour process $\bC_t$.}
\end{figure}

\begin{theo}\label{thm:main_encodings}Under the hypothesis of Theorem \ref{thm:main_gh}, under $`P_{\ds(\kappa)}$,
\begin{equation}\label{eq:all-conv}
\l(\frac{H_\bt( x (\nk-1))}{\nk^{1/2}},{ \frac{C_\bt( x 2(\nk-1))}{\nk^{1/2}}}, \frac{S_\bt( x   \nk )}{\nk^{1/2}}\r)_{x\in [0,1]}\xrightarrow[\kappa\to\infty]{} \l(\frac 2 {\sigma_\bp}\,\se,\frac 2 {\sigma_\bp}\,\se,\sigma_\bp\se\r)
\end{equation}
in distribution in the space $\mathcal C([0,1],`R^3)$ of continuous functions from $[0,1]$ with values in $`R^3$, equipped with the supremum distance.
\end{theo}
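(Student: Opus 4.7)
The plan is to reduce the joint convergence of the three encodings to that of the \LL walk $S_\bt$ alone, exploiting the principle from \citet{MA-MO} that the natural encodings of a plane tree are asymptotically proportional. First I would establish convergence of $S_\bt$ via the Cycle Lemma: a uniformly random element of $\Trees_{\ds(\kappa)}$ is obtained, modulo the Vervaat transform, from a uniformly random permutation $(X_1,\dots,X_{\nk})$ of the multiset in which the value $i-1$ appears with multiplicity $n_i(\kappa)$. The associated walk $W_m=\sum_{j\le m} X_j$ is then a bridge from $0$ to $-1$ built by sampling without replacement; the assumptions $\sigma_\kappa^2\to\sigma_\bp^2$ and $\Delta_\kappa=o(\nk^{1/2})$ supply precisely the variance convergence and the Lindeberg condition required by an invariance principle for exchangeable triangular arrays. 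This yields $W_{\lfloor \nk x\rfloor}/\nk^{1/2}\to \sigma_\bp\,\mathbf{b}(x)$ in $\mathcal{C}[0,1]$, where $\mathbf{b}$ is a standard Brownian bridge, and Vervaat's theorem upgrades this to $S_\bt(\nk x)/\nk^{1/2}\to \sigma_\bp\, \se$ in distribution.

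The main obstacle is the second step: showing that $\sup_{k\le \nk-1}|H_\bt(k)-(2/\sigma_\kappa^2)\,S_\bt(k)|=o(\nk^{1/2})$ in probability. This relies on the combinatorial identity, valid for every plane tree,
\[
H_t(k)=\#\bigl\{\,j\in\{0,\dots,k-1\}: S_t(j)=\min_{j\le i\le k} S_t(i)\,\bigr\},
\]
which expresses $H_t$ as the number of right-to-left weak ladder indices of $S_t$. The heuristic is that each strict descending ladder step of $S_\bt$ accounts on average for a drop of $\sigma_\kappa^2/2$, so, after the Vervaat shift, the ladder count at time $k$ should be approximately $(2/\sigma_\kappa^2)\,S_\bt(k)$. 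Converting this heuristic into a uniform estimate is delicate because, unlike in the Galton--Watson case treated by \citet{MA-MO}, the depth-first increments under $`P_{\ds(\kappa)}$ are only exchangeable, not independent. I would control $`E\bigl[(H_\bt(k)-(2/\sigma_\kappa^2) S_\bt(k))^2\bigr]$ by a second-moment computation based on the backbone decomposition developed in Section~\ref{sec:backbone}, which factors pairs of ladder counts through their shared ancestors, combined with the cyclic invariance of the depth-first sequence to transfer estimates between positions; a chaining argument then promotes pointwise to uniform bounds.

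Finally, the contour process is recovered from $H_\bt$ by the standard time change: if $\phi(i)$ is the first visit time of $\tilde u_{i+1}$ by the contour walk $f_\bt$, then $\bC_\bt(\phi(i))=H_\bt(i)$ and $|\phi(i)-2i|$ is uniformly dominated by $\sup_j H_\bt(j)$, which is $O(\nk^{1/2})$ by the previous step. Consequently $\bC_\bt(2(\nk-1)x)/\nk^{1/2}$ and $H_\bt((\nk-1)x)/\nk^{1/2}$ share the same scaling limit. Joint convergence in $\mathcal{C}([0,1],`R^3)$ is then automatic: all three rescaled encodings are defined, on the common probability space underlying the single walk $W$, as deterministic continuous functionals of the same limiting Brownian excursion $\se$, so marginal convergence together with the explicit couplings above give the stated three-dimensional limit.
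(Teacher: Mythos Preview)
Your overall three-step architecture matches the paper exactly: convergence of $S_\bt$ via the Cycle Lemma and an exchangeable-array invariance principle (the paper's Lemma~\ref{lem:exchan}), then a comparison $H_\bt\approx (2/\sigma_\kappa^2)S_\bt$, then the contour process recovered from $H_\bt$ by the deterministic time change (the paper invokes Lemma~3.19 of \cite{MAMO2} for this, which is precisely your argument). So steps one and three are essentially identical to the paper.

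Where you diverge is the mechanism for step two, and there your sketch is underspecified relative to what the paper actually does. You propose to bound $\Ec{(H_\bt(k)-(2/\sigma_\kappa^2)S_\bt(k))^2}$ via the ladder-point formula and then chain. The paper does \emph{not} use ladder indices of $S_\bt$, second moments, or chaining. Instead it works directly with the quantities $|\bu|$ and $|\R(\bu,\bt)|$ from the backbone decomposition, and the pivotal observation is Proposition~\ref{pro:fond-comb}(c): conditionally on the composition $\bM(\bu,\bt)=\bm$ of the branch, $|\R(\bu,\bt)|$ is exactly a sum of independent uniforms $\sum_{j\ge 1}\sum_{k\le m_j} U_j^{(k)}$. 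This conditional independence is what makes the exchangeable setting tractable; it replaces the fluctuation-theory input you would need for the ladder-count route. The paper then (i) shows by Chernoff-type bounds that the content $\bm$ is typical (Lemma~\ref{lem:content}) and that, on a typical content, the sum of uniforms concentrates near $\sigma_\kappa^2|\bu|/2$ (Lemma~\ref{lem:discrepancy}); this gives only a \emph{pointwise} bound for a uniform random node (Proposition~\ref{pro:compar}). To upgrade to a uniform bound, the paper does \emph{not} chain: it proves tightness of $\nk^{-1/2}H_\bt$ separately (Lemma~\ref{lem:tightness}) via a non-obvious argument that bounds the modulus of continuity of $H_\bt$ by those of $S_\bt$ and the reverse walk $S^-_\bt$, together with a coupling to the tree with all degree-one vertices removed. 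Tightness of $H_\bt$ plus the pointwise comparison then forces the difference process $D_\kappa$ to converge to zero in $\mathcal C[0,1]$.

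Your ``second moment plus chaining'' route is not obviously wrong, but it is a genuine gap as written: you have not said how the backbone decomposition yields the second-moment bound (it is built for a single uniformly marked vertex, not for differences $H_\bt(k)-H_\bt(j)$), nor why second moments alone would suffice for chaining without higher-moment or sub-Gaussian input. The paper's route sidesteps both issues via the conditional-uniform representation and the separate tightness argument.
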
{ The contour process is a kind of interpolation of the height process. The fact that both these processes have the same asymptotic behaviour is well understood in some general settings : it is shown in Marckert and Mokkadem (Lemma 3.19 \cite{MAMO2}) that, if under any model of random trees, the height process has a continuous limit after a non trivial normalisation, then the contour process has the same limit with the same space normalisation (and time normalisation multiplied by 2 to take into account the relative durations of these processes).  This property has been noticed before in the case of Galton--Watson trees conditioned by the size \cite{BK,MA-MO}.\par
As a consequence (of Lemma 3.19 \cite{MAMO2}), to establish 
\begin{equation}\label{eq:all-conv2}
\l(\frac{H_\bt( x (\nk-1))}{\nk^{1/2}}, \frac{S_\bt( x   \nk )}{\nk^{1/2}}\r)_{x\in [0,1]}\xrightarrow[\kappa\to\infty]{} \l(\frac 2 {\sigma_\bp}\,\se,\sigma_\bp\se\r)
\end{equation}
is sufficient to deduce \eref{eq:all-conv}.  }

Note now that the condition $\sigma^2_\bp>0$ is necessary in Theorem \ref{thm:main_encodings}: it ensures that $p_0=\lim_{\kappa\to\infty} n_0(\kappa)/\nk>0$ and that large trees are not close to a linear tree, where most of the nodes have degree one.\par
 A tree $t\in \Trees$ can also be seen as a metric space when equipped with the graph distance $d_t$. A consequence of Theorem~\ref{thm:main_encodings} is that, under $`P_{\ds(\kappa)}$, the metric space $$\l(\bt,\frac{\sigma_\kappa}{\sqrt{\nk}}d_\bt\r)$$ converges to the continuum random tree encoded by $2\se$ in the sense of Gromov--Hausdorff distance between equivalence classes of compact metric spaces. The fact that the convergence { of the contour process (or the height process)} implies the convergence of the trees for the Gromov--Hausdorff topology is well known, see for example Lemma 2.3 in Le Gall \cite{LGRRT}. So, in particular, to prove Theorem~\ref{thm:main_gh} it suffices to prove Theorem~\ref{thm:main_encodings} and for this,  it is sufficient to prove \eref{eq:all-conv2}.

\medskip
\noindent\textbf{Remark.}\ One can define other models of random trees with a prescribed degree sequence: for example, \emph{rooted labelled trees}. Let $`Q_{\ds(k)}$ be the uniform distribution on those with degree sequence $\ds(k)$. Since labelled trees have a canonical ordering (using an order on the labels to order the children of each node), forgetting the labels, they can be seen as plane trees with the same degree sequence, inducing a distribution $`P'_{\ds(k)}$ on the set of plane trees. By a simple counting argument, it turns out that $`P'_{\ds(k)}=`P_{\ds(k)}$. This situation is drastically different from the general case, since the projection of uniform labelled trees on plane tree (that is without fixing the degree sequence) does not induce the uniform distribution on plane trees. As a consequence, Theorem~\ref{thm:main_gh} is also valid for the model of labelled trees with a prescribed degree sequence.

\section{Combinatorial considerations: a backbone decomposition}\label{sec:backbone}

In this section we develop a decomposition of trees under $`P_{\ds(k)}$ along a branch. It is essentially the usual \emph{backbone decomposition} for Galton--Watson trees due to \citet*[see, e.g.,][]{LyPePe95a} transposed under $`P_{\ds(k)}$. The decomposition amounts to describing the structure of the branch from the root to a distinguished node $u$, together with the (ordered) forest formed by the trees rooted at the neighbours of that branch. \medskip

\noindent\textsc{Forest with a given degree sequence.}\
A forest ${\sf f}=(t_1,\dots,t_k)$ is a finite sequence of trees; its degree sequence $\ds({\sf f})=\sum_{i=1}^k \ds(t_i)$ is the (component-wise) sum of the degree sequences of the trees which compose it.
If $\ds=(n_i,i\geq 0)$ is the degree sequence of a forest ${\sf f}$, then the number of roots of ${\sf f}$ is given by  $r=|\ds|-\sum_{i\ge 0} i n_i$. Let $\Forests_\ds$ be the set of forests of ($r$ ordered) plane trees having degree sequence $\ds$.

We have (see, e.g., \cite{Pitman1999b}, p. 128) 
\bq\label{eq:nbf}
\#\Forests_\ds=\frac{r}{|\ds|}\binom{|\ds|}{(n_i,i\geq 0)}=\frac r {|\ds|} \cdot \frac{|\ds| !}{\prod_{i\ge 0} n_i!}.
\eq

\medskip
\noindent\textsc{The content of a branch.}\ Let $t$ be a plane tree, and let $u=i_1\dots i_{|u|}$ be one of its nodes, where $i_j\in \mathbb{N}$ for any $j$. 
For  $j\leq |u|$, write $u_j=i_1\dots i_j$, the ancestor of $u$ having depth $j$ (with the convention $u_0=\varnothing$, the root of $t$). The set $\cro{\varnothing,u}=\{u_j~: j <|u|\}$ is called the branch of $u$ (notice that $u$ is excluded).
For any $i\geq 0$, the number of ancestors of $u$ having $i$ children is written
\[M_i(u,t)=\#\{v~:v\textrm{ strict ancestors of }u,c_t(v)=i\}.\]
We refer to $\bM(u,t)=(M_i(u,t), i\ge 0)$ as the composition of the branch. Note that we necessarily have $M_0(u,t)=0$.
Clearly if $u\in t$, then
\bq |u|=\sum_{i\geq 1}M_i(u,t)=|\bM(u,t)|.\eq
Further let $\LR(u,t)$ (for left or right) be { the set of nodes that are children of some node in $\cro{\varnothing, u}$ without being themselves in $\cro{\varnothing, u}$; note that because of our convention for $\cro{\varnothing,u}$, $u$ belongs to $\LR(u,t)$ (see Figure~\ref{fig:RandLR})}.
\begin{figure}[h]\psfrag{u}{$u$}
\centerline{\includegraphics[height=4.5cm]{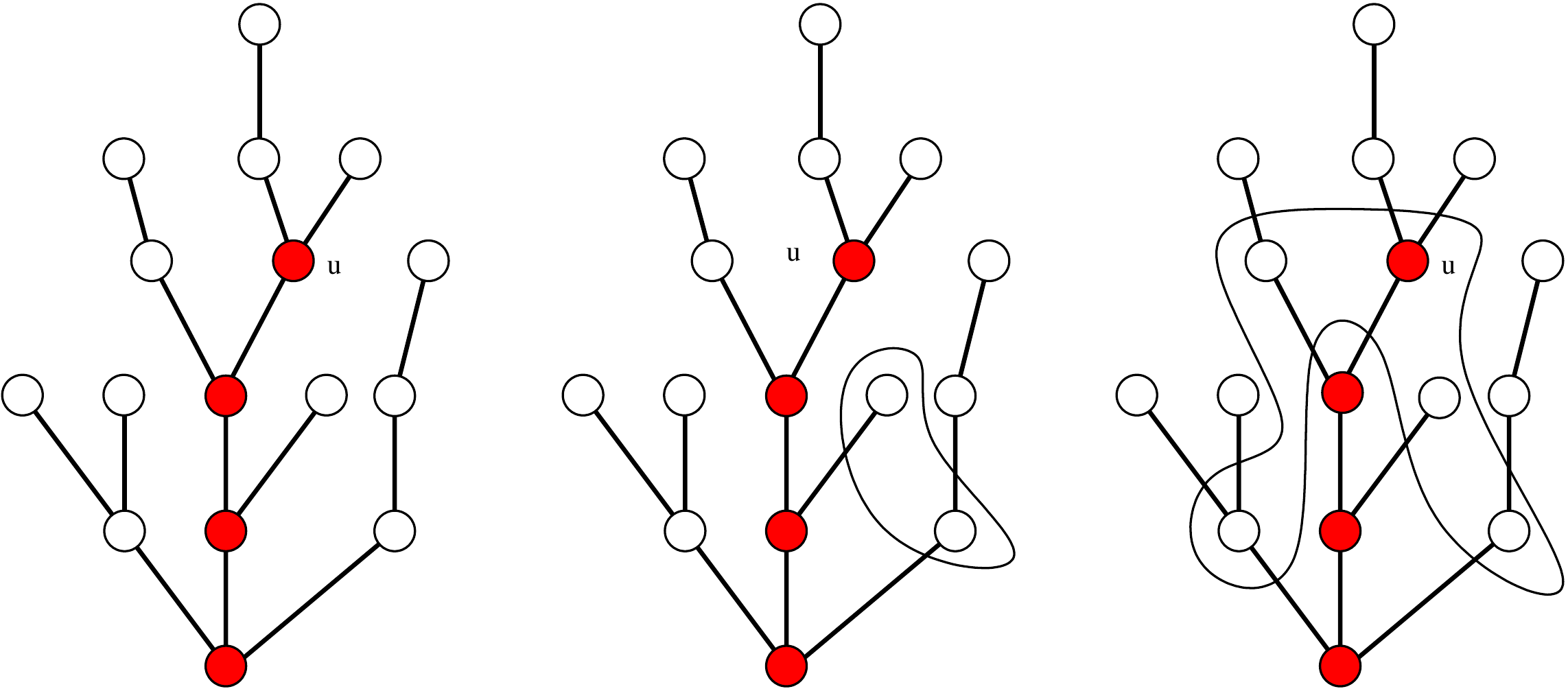}}
\caption{\label{fig:RandLR} { A tree $t$ with a marked node $u$; the sets in the two right-hand side pictures show the sets  $\R(u,t)$ and $\LR(u,t)$.}}
\end{figure}
Let also $\R(u,t)$ be the subset of $\LR(u,t)$, of nodes lying to the right of the path  $\cro{\varnothing,u}$ { (therefore $u\notin \R(u,t)$)}.
A node $v$ is in $\R(u,t)$ if it is a child of some $u_i$, for $i\in \{0,\dots, |u|-1\}$, and satisfies $v>u_{i+1}$ in the lexicographic order on $\mathcal U$. Therefore{
\begin{align*}
|\LR(u,t)|&=\sum_{j=0}^{|u|-1} \l(c_t(u_j)-1\r)+1 =\sum_{i\ge 0} M_i(u,t)(i-1)+1\\
|\R(u,t)|&= \sum_{j=0}^{|u|-1} (c_t(u_j)-i_{j+1}).
\end{align*}}
Let $\tilde u_1=\varnothing<\tilde u_2<\dots< \tilde u_{|t|}$ be the nodes of  $t$, in increasing lexicographic order. Then {
\begin{equation}\label{eq:H_and_S}
 H_t(k)=|\tilde u_{k+1}| \textrm{~~and~~}  S_t(k)=|\R(\tilde u_k,t)|+c_t(\tilde u_k)-1,
\end{equation}} so that the discrepancy between $H_t$ and $S_t$ can be accessed using the number of nodes to the right of the paths to $\tilde u_i$, $i= 1, \dots, |t|$. This observation lies at the heart of our approach.

The set of plane trees with degree sequence $\ds$ and a distinguished node (marked plane trees) is denoted by $\Trees^{\bullet}_\ds=\l\{(t,u)~: t\in \Trees_\ds, u\in t\r\}$, and 
the uniform distribution on this set is denoted $`P^{\bullet}_\ds$. Under $`P^\bullet_\ds$, a marked tree $(t,u)$ is distributed as $(t',u')$ where $t'$ is a tree sampled under $`P_\ds$ and $u'$ is a uniformly random node in $t'$.
We now decompose a marked tree $(t,u)$ along the branch $\cro{\varnothing,u}$. 
First, consider the structure of this branch, that we call the contents:
\[\Cont(t,u):=\big((c_t(u_0),i_1),\dots,(c_t(u_{|u|-1}),i_{|u|})\big).\]
We write $J^\bm$ for the set of potential vectors $\Cont(t,u)$ when the composition of the branch $\cro{\varnothing, u}$ is $\bM(u,t)=\bm$.
Besides, notice that 
\bq\label{eq:cjm}
|J^{\bm}|=\binom{|\bm|}{(m_i,i\geq 1)}\prod_{i\geq 1} i^{m_i}. 
\eq
Since, if $\Cont(u,t)\in J^{\bm}$ { then $|\LR(u,t)|=1+\sum_{i\ge 0}(i-1)m_i$}, we will use the following notation:
\[|\LR(\bm)|:=1+\sum_{i\ge 0}(i-1)m_i.\]

\medskip
\noindent\textsc{The forest off a distinguished path.}\
For a tree $t$ and any node $v\in t$, let $t_v=\{w~:vw \in t\}$ be the subtree of $t$ rooted at $v$. The sequence of trees $\F(t,u) = (t_v, v \in \LR(u,t))$
is the forest constituted by the subtrees of $t$ rooted at the vertices belonging to $\LR(u,t)$, and sorted according to the rank of their root for the lexicographic order.

The decomposition which associates $(\Cont(t,u),\F(t,u))$ to a marked tree $(t,u)$ is clearly one-to-one. The following proposition characterises the distributions of $\bM(\bu,\bt)$, $\Cont(\bu,\bt)$, and $|\R(\bu,\bt)|$ when $(\bt,\bu)$ is sampled under $`P^\bullet_\ds$. { In the following, for two sequences of integers $\bs=(n_0,n_1,\dots)$ and $\bm=(m_0,m_1,\dots)$ we write $\bs-\bm=(n_0-m_0,n_1-m_1,\dots)$.}

\begin{pro}\label{pro:fond-comb} Let $\ds=(n_0,n_1,\dots)$ be a degree sequence and let ${\bm}=(m_0,m_1,\dots)$ be such that $m_0=0$, and $m_i\leq n_i$ for any $i\geq 1$. Let $(\bt,\bu)$ be chosen according to $`P^{\bullet}_\ds$.\\
(a) We have  
\[`P^{\bullet}_\ds\l(\bM(\bu,\bt)=\bm\r)=\frac{|\LR(\bm)|\,|\bm|! \,|\ds-\bm|!}{|\ds|!\,|\ds-\bm|}\cdot \prod_{i\geq 1}\binom{n_i}{m_i}{i^{m_i}}.\]
(b) Moreover, for any vector $C\in J^\bm$, 
\[`P^{\bullet}_\ds\l(\Cont(\bu,\bt)=C~|~\bM(\bu,\bt)=\bm\r)= 1/{\#J^\bm}.\]
(c) For any $x\geq 0$, and $\bm$ such that $`P_\ds^\bullet(\bM(\bu,\bt)=\bm)>0$, 
\bq\label{eq:rep}
`P^{\bullet}_\ds\l(\l||\R(\bu,\bt)|-\frac{\sigma_\ds^2}{2}|\bu|\r|\geq x~\Bigg|~\bM(\bu,\bt)=\bm\r)=`P\l(\l|\sum_{j\geq 1}\sum_{k=1}^{m_j} U^{(k)}_j-\frac{\sigma_\ds^2}{2}|\bm|\r|\geq x\r)
\eq
where the $U^{(k)}_j$ are independent random variables, $U^{(k)}_j$ is uniform in $\{0,\dots,j-1\}$ and where $\sigma_\ds^2$ is the variance associated with $(p_i=n_i/|\ds|$, $i\ge 0$) (as done on \eref{eq:sigma}).
\end{pro}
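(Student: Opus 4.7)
The plan: all three parts proceed from the bijection
\[\{(t,u)\in \Trees^\bullet_\ds : \bM(u,t)=\bm\}\;\longleftrightarrow\; J^\bm \times \Forests_{\ds-\bm}\]
valid for any $\bm$ with $m_0=0$ and $m_i\leq n_i$, where the forest has $|\LR(\bm)|$ ordered roots. This is just a restatement of the decomposition $(t,u)\mapsto(\Cont(t,u),\F(t,u))$: the content prescribes the degree and next-child index at each level of the branch, and the forest, read in lexicographic order along $\LR(u,t)$, slots uniquely into the remaining positions. Its verification reduces to two bookkeeping checks: the forest carries the remaining degrees, so its degree sequence is $\ds-\bm$; and the number of roots equals $|\LR(u,t)|=|\LR(\bm)|$, consistent with the value $r=|\ds-\bm|-\sum_i i(n_i-m_i)$ from \eqref{eq:nbf} after using the tree identity $|\ds|=1+\sum_i in_i$.

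From this bijection, part (a) reduces to counting: the numerator is $\#J^\bm \cdot \#\Forests_{\ds-\bm}$, evaluated by \eqref{eq:cjm} and \eqref{eq:nbf} (with $r=|\LR(\bm)|$), while the denominator $\#\Trees^\bullet_\ds=|\ds|!/\prod_i n_i!$ follows from \eqref{eq:nbf} with $r=1$ multiplied by $|\ds|$ for the choice of marked node. The factorials simplify routinely, the $n_0!$ cancels thanks to $m_0=0$, and the stated product $\prod_{i\geq 1}\binom{n_i}{m_i}i^{m_i}$ emerges. Part (b) is immediate from the same bijection: for each $C\in J^\bm$ the number of compatible marked trees equals $\#\Forests_{\ds-\bm}$, a quantity independent of $C$, so the conditional law is uniform on $J^\bm$.

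For part (c), I would exploit the product factorization suggested by $|J^\bm|=\binom{|\bm|}{(m_i,i\geq 1)}\prod_i i^{m_i}$: a uniform element of $J^\bm$ is obtained by first drawing a uniform arrangement of the degree sequence $(d_0,\dots,d_{|\bm|-1})$ with $m_i$ entries equal to $i$ for each $i\geq 1$, and then independently drawing each $i_{j+1}$ uniformly in $\{1,\dots,d_j\}$. The identity $|\R(u,t)|=\sum_{j=0}^{|\bm|-1}(d_j-i_{j+1})$ then writes $|\R(\bu,\bt)|$ as a sum of independent variables $U_j:=d_j-i_{j+1}$, each uniform on $\{0,\dots,d_j-1\}$. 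Grouping by the value of $d_j$ and invoking exchangeability, this sum has the same law as $\sum_{i\geq 1}\sum_{k=1}^{m_i}U_i^{(k)}$ for independent $U_i^{(k)}$ uniform on $\{0,\dots,i-1\}$. Since $|\bu|=|\bm|$ under the conditioning, subtracting the deterministic constant $\sigma_\ds^2|\bm|/2$ on both sides yields \eqref{eq:rep}.

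The proof is essentially combinatorial bookkeeping; no subtle probabilistic input is required. The only step demanding care is the multinomial algebra in (a). The substantive content is in (c): the decomposition exposes just enough independence that $|\R(\bu,\bt)|$ becomes a sum of bounded independent random variables, a form that will later admit Chebyshev- or Hoeffding-type concentration around $\sigma_\ds^2|\bu|/2$ in the asymptotic analysis.
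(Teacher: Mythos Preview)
Your proposal is correct and follows essentially the same approach as the paper: both derive (a) and (b) from the bijection $(t,u)\mapsto(\Cont(t,u),\F(t,u))$ together with the enumeration formulas \eqref{eq:nbf} and \eqref{eq:cjm}, and both obtain (c) by observing that under the uniform law on $J^\bm$ the quantity $|\R(\bu,\bt)|=\sum_j(d_j-i_{j+1})$ decomposes as a sum of independent uniforms. Your write-up is in fact slightly more explicit than the paper's in justifying the independence structure behind (c), but the argument is the same.
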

\begin{proof}Since the backbone decomposition is a bijection, we have for any 
vector $C\in J^\bm$, we have
\be
`P^{\bullet}_\ds\l(\Cont(\bu,\bt)=C\r)&=&\frac{\#\Forests_{\ds-\bm}}{|\ds|\cdot \#\Forests\ds}\\
&=& \frac{|\LR(\bm)|}{|\ds-\bm|}\binom{|\ds-\bm|}{(n_i-m_i,i\geq 0)} \bigg/\binom{|\ds|}{(n_i,i\geq 0)},
\ee
by the expression for the number of forests in \eref{eq:nbf}. As $`P^\bullet_\ds\l(\Cont(\bu,\bt)=C\r)$ is independent of $C\in J^\bm$, it suffices to multiply by $\#J^{\bm}$ in order to get $`P^{\bullet}_\ds\l(\bM(\bu,\bt)=\bm\r)$. After simplification, this yields the first statement in (a), and then (b). Now, (b) implies that 
for any $R\geq 0$, and any composition $\bm$ for which $`P^{\bullet}_\ds(\bM(\bu,\bt)=\bm)>0$, we have
\[`P^{\bullet}_\ds\l(|\R(\bu,\bt)|=R~|~\bM(\bu,\bt)=\bm\r)=`P\l(\sum_{j\geq 1}\sum_{k=1}^{m_j} U^{(k)}_j=R\r),\]
where the $U^{(k)}_j$ are independent random variables, and $U^{(k)}_j$ is uniform in $\{0,\dots,j-1\}$. This implies assertion (c) and completes the proof.
\end{proof}

\section{Convergence of uniform trees to the CRT: Proof of Theorem~\ref{thm:main_encodings}}
\label{sec:proof}

\subsection{The general approach}\label{sec:conv_approach}
Our approach uses the phenomenon observed in Marckert \& Mokkadem \cite{MA-MO} in the case of critical Galton--Watson tree (having a variance): under some mild assumptions the \LL path $S_\bt$ and the height process $H_\bt$ are asymptotically proportional, that is, up to a scalar normalisation, the difference between these processes converge to the zero function. It turns out that a similar phenomenon occurs when the degree sequence is prescribed, and this is the basis of our proof.

In order to prove Theorem~\ref{thm:main_encodings} we proceed in two steps: the first one consists in showing that the depth-first walk $S_\bt$ associated to a tree sampled under $`P_{\ds(\kappa)}$ converges to a Brownian excursion. The process $S_\bt$ is much easier to deal with than $H_\bt$, { since $S_\bt$ is \emph{essentially} a random walk conditioned to stay non-negative, and forced to end up at the origin (precisely at $-1$).} We provide the details in Section~\ref{sec:luka} below. The core of the work lies in the second step, which consists in proving that { rescaled versions of } $S_\bt$ and $H_\bt$ are indeed close, uniformly on $[0,1]$. More precisely, by Theorem 3.1 p.~27 of \cite{BIL}, the following proposition is sufficient to show that $\nk^{-1/2} 2 S_\bt(\nk\cdot)$ and $\nk^{-1/2} \sigma_{\kappa}^2 H_\bt((\nk-1)\cdot )$ have the same limit in $({\mathcal C}[0,1], \|_\infty)$.

\begin{pro}\label{pro:compar_unif}Under the hypothesis of Theorem \ref{thm:main_gh}, there exists $c_\kappa=o(\nk^{1/2})$ such that, as $\kappa\to\infty$,
\[`P_{\ds(\kappa)}\l( \sup_{x\in[0,1]}\l|S_\bt(x\nk)- \frac{\sigma_{\kappa}^2}2 H_\bt(x(\nk-1))\r|\geq c_\kappa \r)\xrightarrow[\kappa\to\infty]{} 0.\]
\end{pro}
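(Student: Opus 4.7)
The overall plan is to use the backbone decomposition of Proposition~\ref{pro:fond-comb} to control the deviation $|\R(\bu,\bt)|-(\sigma_\kappa^2/2)|\bu|$ for a uniformly marked vertex $\bu$, and then to upgrade this pointwise estimate to the supremum over all $k$. By the identity \eref{eq:H_and_S},
$$S_\bt(k)-\frac{\sigma_\kappa^2}{2}H_\bt(k-1)=|\R(\tilde u_k,\bt)|-\frac{\sigma_\kappa^2}{2}|\tilde u_k|+c_\bt(\tilde u_k)-1,$$
and since the last term is at most $\Delta_\kappa-1=o(\sqrt{\nk})$ deterministically, Proposition~\ref{pro:compar_unif} reduces to proving
$$\sup_{1\leq k\leq\nk}\Bigl||\R(\tilde u_k,\bt)|-\frac{\sigma_\kappa^2}{2}|\tilde u_k|\Bigr|=o_P(\sqrt{\nk}).$$
Because the $\nk$ vertices of $\bt$ are precisely the possible uniform marks, this is naturally analysed under $`P^{\bullet}_{\ds(\kappa)}$.

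For a uniform mark $\bu$, Proposition~\ref{pro:fond-comb}(c) yields the distributional identity $|\R(\bu,\bt)|\eqdist Y_\bm:=\sum_{j\geq 1}\sum_{k=1}^{m_j}U_j^{(k)}$ conditionally on $\bM(\bu,\bt)=\bm$, the $U_j^{(k)}$ being independent uniforms on $\{0,\ldots,j-1\}$. Using the identity $\sigma_\kappa^2=\sum_i i(i-1)n_i(\kappa)/(\nk-1)$ (which follows from $\sum_i in_i(\kappa)=\nk-1$), I would split
$$|\R(\bu,\bt)|-\frac{\sigma_\kappa^2}{2}|\bu|=\bigl(Y_\bm-\mathbb{E}[Y_\bm\mid\bm]\bigr)+\frac{1}{2}\sum_{j\geq 1}(j-1)\Bigl(m_j-|\bm|\,\frac{jn_j(\kappa)}{\nk-1}\Bigr).$$
The first summand is a centred sum of independent, $\Delta_\kappa$-bounded variables with conditional variance $\tfrac{1}{12}\sum_j m_j(j^2-1)$, which I would control by Bernstein's inequality. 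The second, deterministic summand measures the deviation of the branch composition from its size-biased target; its smallness is extracted from the factorised law of $\bM(\bu,\bt)$ given by Proposition~\ref{pro:fond-comb}(a), which identifies $\bm$ as a size-biased selection from the available degrees and yields $m_j/|\bm|$ close to $jn_j(\kappa)/(\nk-1)$ up to fluctuations of order $\sqrt{|\bm|}$.

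The main obstacle is passing from this pointwise estimate to a supremum over the $\nk$ vertices of $\bt$. A direct union bound against a Hoeffding tail of order $\exp(-c_\kappa^2/|\bm|\Delta_\kappa^2)$ is too weak when $\Delta_\kappa$ is close to $\sqrt{\nk}$. My plan is to combine three ingredients: (i) truncate to the high-probability event $\{\max_{u\in\bt}|u|\leq K\sqrt{\nk}\}$, extracted from the tightness of $S_\bt/\sqrt{\nk}$ established in Section~\ref{sec:luka} (the maximum depth is bounded by the extremum of the Lukasiewicz path); (ii) separate ancestors of small degree (where the Bernstein variance is substantially smaller than $|\bm|\Delta_\kappa^2$) from ancestors of large degree (which, being globally rare, occur only a few times on any given branch and can be treated crudely); (iii) perform the union bound only at a grid of $O(\log\nk)$ indices, controlling oscillations between grid points by the modulus of continuity of $S_\bt/\sqrt{\nk}$ combined with the pointwise bound from the previous paragraph at the grid. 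A careful calibration of $c_\kappa=o(\sqrt{\nk})$ absorbs all three contributions and yields Proposition~\ref{pro:compar_unif}.
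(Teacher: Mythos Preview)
Your reduction to controlling $|\R(\bu,\bt)|-\tfrac{\sigma_\kappa^2}{2}|\bu|$ via \eqref{eq:H_and_S}, and your plan to split this into a centred sum of uniforms plus a deterministic ``branch-content'' deviation, is exactly what the paper does (this is Proposition~\ref{pro:compar} together with Lemmas~\ref{lem:content} and~\ref{lem:discrepancy}). You are also right that a naive union bound over all $\nk$ vertices fails under the optimal hypotheses; the paper makes the same remark.

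The genuine gap is in your step (iii). To pass from a grid to the full supremum you must control the oscillations of $D_\kappa=S_\bt-\tfrac{\sigma_\kappa^2}{2}H_\bt$ between grid points, and this requires controlling the oscillations of $H_\bt$, not just those of $S_\bt$. Knowing that $S$ is small at the grid and that $S$ varies little between grid points tells you that $H$ is close to its expected value \emph{at the grid}, but says nothing about $H$ in between: the height process can drop arbitrarily far in a single step and, with $O(\log\nk)$ grid points, the spacing $\sim \nk/\log\nk$ allows $H$ to dip and recover by an amount far exceeding $\sqrt{\nk}$. Your claim in (i) that the maximal height is bounded by the extremum of $S_\bt$ is also false (a path has $\max H=\nk-1$ while $\max S=0$); the paper instead invokes Addario-Berry's Gaussian tail bound \eqref{eq:bound_height}.

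What the paper actually does is abandon the union-bound route entirely and argue via tightness. The pointwise estimate (Proposition~\ref{pro:compar}) shows that the \emph{proportion} of indices $k$ at which $|D_\kappa(k)|\ge c_\kappa$ tends to zero; combined with tightness of $(D_\kappa)_\kappa$ in $C[0,1]$, this forces finite-dimensional convergence to the zero process, and then (again by tightness) uniform convergence. Tightness of $\nk^{-1/2}S_\bt$ is Lemma~\ref{lem:exchan}, but tightness of $\nk^{-1/2}H_\bt$ is a separate, nontrivial ingredient (Lemma~\ref{lem:tightness}): it is proved by bounding the modulus of continuity of $H$ in terms of those of $S$ and the reverse depth-first walk $S^-$, plus a count of degree-one ancestors, the latter being handled by a coupling that first removes all degree-one nodes. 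This tightness argument is precisely the missing idea in your proposal.
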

In order to prove Proposition~\ref{pro:compar_unif}, { recall the representations of $S_t$ and $H_t$ in terms of $|\R(u,t)|$ and $|u|$ given in \eref{eq:H_and_S}. }
A non-uniform version of the claim in Proposition~\ref{pro:compar_unif} is the following: 
\begin{pro}\label{pro:compar}Assume the hypothesis of Theorem \ref{thm:main_gh}. Let $(\bt,\bu)$ chosen under $`P_{\ds(\kappa)}^\bullet$. There exists $c_\kappa=o(\nk^{1/2})$ such that, 
\[`P_{\ds(\kappa)}^\bullet\l( \l||\R(\bu,\bt)|- \frac{\sigma_{\kappa}^2}2|\bu|\r|\geq c_\kappa \r)\xrightarrow[\kappa\to\infty]{} 0.\]
\end{pro}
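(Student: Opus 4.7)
My plan for proving Proposition~\ref{pro:compar} combines the exact conditional representation in Proposition~\ref{pro:fond-comb}(c) with a second-moment analysis of the marginal law of $\bM$ coming from Proposition~\ref{pro:fond-comb}(a). First, using the identity $\sum_{j\ge 1} M_j(u,t)(j-1) = |\LR(u,t)|-1$ derived in Section~\ref{sec:backbone}, I split
\begin{equation*}
|\R(\bu,\bt)|-\frac{\sigma_\kappa^2}{2}|\bu| = X + Y,
\end{equation*}
where
\begin{equation*}
X := |\R(\bu,\bt)| - \frac{1}{2}\sum_{j\ge 1} M_j(\bu,\bt)(j-1),
\qquad
Y := \frac{1}{2}\bigl(|\LR(\bu,\bt)|-1-\sigma_\kappa^2|\bu|\bigr),
\end{equation*}
and it suffices to bound each of $|X|$ and $|Y|$ by $c_\kappa/2$ with probability $1-o(1)$.

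The bound on $X$ rests on Proposition~\ref{pro:fond-comb}(c), which says that conditionally on $\bM(\bu,\bt)=\bm$ the variable $X$ has the same distribution as the centered sum $\sum_{j,k}(U_j^{(k)}-(j-1)/2)$ of independent variables bounded by $\Delta_\kappa$. A Bernstein inequality conditional on $\bM$ then controls the tail in terms of the conditional variance $V_\bm := \sum_{j\ge 1} M_j(j^2-1)/12$; after taking expectations, the key estimate I need is $`E^\bullet[V_\bm] = O(\nk^{1/2}\Delta_\kappa)$. This is obtained from the double-counting identity $`E^\bullet[\sum_j M_j j^2] = \nk^{-1}\,`E[\sum_{v\in\bt} c_\bt(v)^2(|\bt_v|-1)]$ combined with control on subtree sizes and on the total path length $\sum_u |u| = O(\nk^{3/2})$ that is available from the Lukasiewicz analysis carried out in Section~\ref{sec:luka}.

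The bound on $Y$ uses Proposition~\ref{pro:fond-comb}(a) to describe the conditional law of $\bM$ given $|\bu|=h$. Up to the prefactor $|\LR(\bm)|/(\nk-h)$, this law coincides with the multivariate hypergeometric distribution obtained by sampling $h$ items without replacement from the ``edge'' multiset of size $\nk-1$ containing each degree-$i$ node with multiplicity $i$. Under this hypergeometric (or its multinomial approximation with probabilities $q_i = i n_i(\kappa)/(\nk-1)$), $|\LR(\bm)|-1 = \sum_{j\ge 1}(j-1) m_j$ is a sum of $h$ exchangeable increments with mean $\sigma_\kappa^2$ and per-step variance bounded by $\sum_i i^3 p_i(\kappa) \le \Delta_\kappa(1+\sigma_\kappa^2)$. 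A Bernstein (or Chebyshev) tail estimate for this sum gives concentration on the scale $\nk^{1/4}\Delta_\kappa^{1/2} = o(\nk^{1/2})$ under the hypothesis $\Delta_\kappa = o(\nk^{1/2})$; the weight $|\LR(\bm)|/(\nk-h)$ is absorbed by a bootstrap, restricting first to the high-probability event on which $|\LR(\bm)| = O(h\sigma_\kappa^2)$, so that the weight is of order $O(1)$.

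Combining these two tail estimates with the a priori bound $`P^\bullet_{\ds(\kappa)}(|\bu|\ge M\nk^{1/2})\to 0$ as $M\to\infty$ (itself a consequence of $`E^\bullet[|\bu|] = \nk^{-1}\sum_u|u| = O(\nk^{1/2})$, again reading the path length estimate from Section~\ref{sec:luka}) and choosing $c_\kappa\to \infty$ with $c_\kappa = o(\nk^{1/2})$ and $c_\kappa^2/(\Delta_\kappa\nk^{1/2})\to \infty$ finishes the proof. The main obstacle is the step for $Y$: the bias factor $|\LR(\bm)|/(\nk-h)$ in the marginal law of $\bM$ prevents a direct reduction to an i.i.d.\ or exchangeable sum, and the two-stage bootstrap needed to handle it is the most delicate part of the argument.
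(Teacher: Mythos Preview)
Your decomposition $|\R(\bu,\bt)|-\tfrac{\sigma_\kappa^2}{2}|\bu|=X+Y$ is exactly the split the paper makes implicitly: the paper's Lemma~\ref{lem:discrepancy} controls your $X$ (via Proposition~\ref{pro:fond-comb}(c) and a Chernoff bound on the $U_j^{(k)}$), and Lemma~\ref{lem:content} controls your $Y$ (via the multinomial comparison you describe). Your two-stage ``bootstrap'' for the bias factor $|\LR(\bm)|/(\nk-h)$ is also what the paper does: it first restricts to the event $\{|\LR(\bu,\bt)|\le x\sqrt{\nk}\}$ (bounded using $S$ and $S^-$), and on that event the weight is $O(x/\sqrt{\nk})$.

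The genuine gap is in the inputs you draw from Section~\ref{sec:luka}. That section only proves weak convergence of $\nk^{-1/2}S_\bt(\nk\cdot)$ to a Brownian excursion; it gives no moment bounds. You invoke it twice for moment estimates: (i) for $`E^\bullet[|\bu|]=\nk^{-1}\sum_u |u|=O(\nk^{1/2})$, and (ii) for the control of $`E^\bullet[V_\bm]$ via $\nk^{-1}`E\bigl[\sum_v c_\bt(v)^2(|\bt_v|-1)\bigr]$. Neither follows from weak convergence of $S$. Worse, $\sum_u|u|=\sum_k H_\bt(k)$ is a functional of the \emph{height} process, not of the \L ukasiewicz path, and bounding it is essentially what Proposition~\ref{pro:compar} (together with the tightness in Section~\ref{sec:tightness}) is set up to deliver --- so the argument as written is circular. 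A related issue: a conditional Bernstein bound on $X$ followed by taking expectations does \emph{not} reduce to a bound in $`E^\bullet[V_\bm]$ (Jensen goes the wrong way); you would need either Chebyshev, or a high-probability bound on $V_\bm$.

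The paper avoids all of this by replacing moments with tail bounds that are available \emph{a priori}: the Gaussian tail for $\max_u|u|$ from \citet{Addario2011a} (see \eqref{eq:bound_height}) and the bound on $\max_k S(k)$ from Aldous' exchangeable-increments inequality (see \eqref{eq:bound_LR}). With these in hand, one restricts to $\{|\bu|\vee|\LR(\bu,\bt)|\le x\sqrt{\nk}\}$ and then shows that, on this event, the multinomial comparison forces $\sum_j j^2 M_j\le b_\kappa$ with high probability (this is the content of Lemma~\ref{lem:content}); Lemma~\ref{lem:discrepancy} then handles $X$ with $V_\bm\le b_\kappa/12$ deterministically. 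If you want to keep your moment-based route, the fix is to replace the appeals to Section~\ref{sec:luka} by these tail inputs; for instance, integrating the Addario-Berry bound gives $`E^\bullet[|\bu|]=O(\sqrt{\nk})$ legitimately, and the high-probability bound on $\sum_j j^2 M_j$ can substitute for the expectation of $V_\bm$.
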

{ Again, by \eref{eq:H_and_S}, one sees that 
{ $$\l|\left||\R(\bu,\bt)|- \frac{\sigma_{\kappa}^2}2|\bu|\right|-\l|S_\bt(\bu)- \frac{\sigma_{\kappa}^2}2 H_\bt(\bu-1)\r|\r|\le \Delta_\kappa,$$
and $\Delta_\kappa=o(\sqrt \nk)$, by assumption.} Therefore, Proposition~\ref{pro:compar} implies then that the proportion of indexes $m\in\cro{0,\nk}$ for which $$\l|S_\bt(m+1)- \frac{\sigma_{\kappa}^2}2 H_\bt(m)\r|\geq c_\kappa{ -}\Delta_k$$ goes to 0 { (we will choose $c_\kappa$ such that $\Delta_\kappa=o(c_\kappa)$)}. In this case, if the sequence of processes $(D_\kappa:=\nk^{-1/2}( S_\bt(x\nk)- \frac{\sigma_{\kappa}^2}2 H_\bt(x(\nk-1))),\kappa\geq 1)$ is tight, we can deduce the convergence of the finite distributions of $(D_\kappa,\kappa\geq 1)$  to those of the null process on $[0,1]$.  Hence,  to show Proposition~\ref{pro:compar_unif}, it suffices to show Proposition~\ref{pro:compar} together with the tightness of $(D_\kappa,\kappa\geq 1)${ ; the tightness is actually also} needed to show the convergence of $D_\kappa$ in distribution in $C[0,1]$ (see \cite[see, e.g.][]{BIL}). Since under  the sequence of distributions $`P_{\ds(\kappa)}$, 
the family of rescaled versions of $S_\bt$ (see Section~\ref{sec:luka}) is tight, it suffices to prove that the family of rescaled versions of $H_\bt$ is tight as well.\par
We need also to say a word about the fact that both processes $S_t$ and $H_t$ have a small difference in their time rescaling. Again, this is not a problem since the process $S_t$ has its increments bounded by $\Delta_\kappa=o(\sqrt(\nk))$.}

\medskip
\noindent\textbf{Remark.}\ Under slightly stronger assumptions on the degree sequences, it is possible to control the discrepancy between the height process and the \LL path at \emph{every point} in $\{0,1,\dots, \nk-1\}$. More precisely it would be possible to show that
\begin{equation}\label{eq:control_everywhere}
`P_{\ds(\kappa)}^{\bullet}\l( \l||\R(\bu,\bt)|- \frac{\sigma_{\kappa}^2}2|\bu|\r|\geq c_{\kappa} \r) =o(1/\nk).
\end{equation}
Using the union bound, this yields the convergence of the rescaled height process to a Brownian excursion, as a random function in ${\mathcal C}[0,1]$. One is easily convinced that with the optimal assumptions for Theorem~\ref{thm:main_encodings}, the bound in \eqref{eq:control_everywhere} might just not hold.

\medskip
We now move on to the ingredients of the proof: we first give the details of the convergence of $\nk^{-1/2}S_\bt(~\cdot~\nk)$ to a Brownian excursion in Section~\ref{sec:luka}, then we prove tightness for $\nk^{-1/2}H_\bt~(\cdot~(\nk-1))$ in Section~\ref{sec:tightness}. The longer proof of Proposition~\ref{pro:compar} is delayed until Section~\ref{sec:fdd}.

\subsection{Convergence of the \LL walk}\label{sec:luka}

In this section, we give the details of the proof of the convergence of the depth-first walk under $`P_{\ds(\kappa)}$ towards the Brownian excursion.
\begin{lem}\label{lem:exchan}Assume the hypothesis of Theorem \ref{thm:main_gh}. Under $`P_{\ds(\kappa)}$,
$$\left(\frac{S_\bt(x \nk)}{\sigma_\kappa \nk^{1/2}}\right)_{x\in[0,1]} \xrightarrow[\kappa\to+\infty]{(law)} \se$$
as random functions in $\mathcal C[0,1]$.
\end{lem}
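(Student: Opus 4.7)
The plan is to use the classical cycle-lemma / Vervaat representation of the Lukasiewicz walk of a uniform plane tree with prescribed degree sequence as a cyclic shift of a bridge with exchangeable increments, then combine a functional CLT for this bridge with Vervaat's continuous shift map.

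Let $(\bar X_1,\dots,\bar X_{\nk})$ be a uniform random permutation of the multiset containing $n_i(\kappa)$ copies of $i-1$ for each $i\ge 0$, and let $\bar W_k=\bar X_1+\dots+\bar X_k$. Since $\sum_i (i-1)n_i(\kappa)=-1$ and the increments are bounded below by $-1$, $\bar W$ is a bridge from $0$ to $-1$. The cycle lemma (see, e.g., \cite{Pitman2006}) asserts that exactly one cyclic shift of $\bar W$ remains non-negative on $\cro{0,\nk-1}$, and that this shifted walk has the same law as $S_\bt$ under $`P_{\ds(\kappa)}$; in other words, $S_\bt$ is the Vervaat transform of $\bar W$.

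The first step is a functional CLT for $\bar W$. The increments $\bar X_j$ are exchangeable with mean $-1/\nk$, variance asymptotic to $\sigma_\bp^2$, and maximum absolute value $\max(1,\Delta_\kappa-1)=o(\nk^{1/2})$, which provides Lindeberg's condition. A classical invariance principle for exchangeable bridges (or equivalently, Donsker's theorem applied to the i.i.d.\ walk with step distribution $\bp(\kappa)$, together with a local limit theorem used to condition on the endpoint $-1$) yields
\[
\l(\frac{\bar W(\lfloor x\nk\rfloor)}{\sigma_\kappa\,\nk^{1/2}}\r)_{x\in[0,1]}\xrightarrow[\kappa\to\infty]{(law)} \bB,
\]
where $\bB$ is a standard Brownian bridge on $[0,1]$.

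The second step is Vervaat's theorem: the cyclic-shift-at-the-infimum map is continuous on $\cun$ at paths whose infimum is attained uniquely, and sends a standard Brownian bridge to the standard Brownian excursion $\se$. Since the infimum of $\bB$ is almost surely uniquely attained, the continuous mapping theorem combined with the cycle-lemma identity gives the claimed convergence of $\bigl(S_\bt(x\nk)/(\sigma_\kappa\,\nk^{1/2})\bigr)_{x\in[0,1]}$ to $\se$ in $\cun$. The main obstacle is the bridge invariance principle above: while classical, it requires a careful use of the hypothesis $\Delta_\kappa=o(\nk^{1/2})$ to secure Lindeberg's condition and either a direct treatment of the exchangeable structure or a local limit theorem to transfer the unconditional Donsker limit to the bridge limit. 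Once this is in hand, the cycle lemma and Vervaat's theorem conclude without further effort.
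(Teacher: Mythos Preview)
Your proposal is correct and follows essentially the same route as the paper: represent $S_\bt$ via the cycle lemma as the Vervaat transform of an exchangeable bridge, prove the rescaled bridge converges to a Brownian bridge (the paper invokes Aldous's Theorem~20.7 on finite exchangeable sequences, which handles the triangular-array structure directly under $\Delta_\kappa=o(\sqrt{\nk})$), and conclude by the continuous mapping theorem at the a.s.\ unique minimum of the Brownian bridge. Your alternative suggestion via Donsker plus a local limit theorem would be more delicate because the step law varies with $\kappa$; the exchangeable-array CLT is the cleaner route and is exactly what the paper uses.
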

\begin{proof}{ 
Let $\bc=\{c_1,c_2,\dots, c_{\nk}\}$ be a multiset of $\nk$ integers whose distribution is given by $\ds(\kappa)$. Let $\pi=(\pi_1,\pi_2,\dots, \pi_{\nk})$ be a uniform random permutation of $\{1,2,\dots, \nk\}$, and  for $j\in\{1,\dots, \nk\}$, define 
$$W_{\pi}(j)=\sum_{i=1}^j (c_{\pi_i}-1).$$
Theorem 20.7 of Aldous \cite{Aldous1983} (see also Theorem~24.1 in \cite{BIL}) ensures that, when $\Delta_\kappa =o(\sqrt{\nk})$,
$$\left(\frac{W_\pi(s \nk)}{\sigma_\kappa \nk^{1/2}}\right)_{s\in [0,1]}\xrightarrow[\kappa\to+\infty]{(law)} \sb,$$
in $\mathcal C[0,1]$, where $\sb=(\sb(s), s\in [0,1])$ is a standard Brownian bridge.   \par
The increments of the walk $(W_\pi(j), 0\le j\le \nk)$ satisfy $c_{\pi_i}-1\ge -1$ for every $i$ (such walks are sometimes called \emph{left-continuous}), and furthermore, $W_\pi(\nk)=-1$. The cycle lemma \cite{DvMo1947a} ensures that there is a unique way to turn the process $W_\pi$ into an excursion by shifting the increments cyclically (in each rotation class there is a unique excursion) : to see this, first extend the definition of the permutation, setting $\pi_j:=\pi_{j-\nk}$  for any  $j\in\{\nk+1,\dots, 2\nk\}$. For $j_\pi$ the location of the first minimum of the walk $W_\pi$ in $\{1,\dots, \nk\}$, we have that $W_\pi(j+j_\pi)-W_{\pi}(j_\pi)$ is an excursion in the following sense:
$$ \tilde S_\pi(j):=W_\pi(j+j_\pi)-W_\pi(j_\pi)\ge 0 \quad \text{for } j<\nk\qquad \text{and }\tilde S_\pi(\nk)=-1.$$
Since in each rotation class there is exactly one excursion, and since the set of excursions hence obtained is exactly the set of  depth-first walk of the trees in $\Trees_{\ds(\kappa)}$, it is then easy to conclude that for $\bt$ uniformly chosen in $\Trees_{\ds(\kappa)}$, 
$$(S_\bt(j), 0\le j\le \nk) \eqdist (\tilde S_\pi(j), 0\le j\le \nk),$$
for $\pi$ a random permutation of $\{1,\dots, \nk\}$. } 
Since the Brownian bridge $\sb$ has almost surely a unique minimum, the claim follows by the mapping theorem \cite{BIL}.
\end{proof}

\subsection{Tightness for the height process}\label{sec:tightness}

The rescaled height process under $`P_\ds(\kappa)$ is the process in $\mathcal C[0,1]$, $h_\kappa=\nk^{-1/2} H(~\cdot~(\nk-1))$. 
In this section, we prove that the family $(h_\kappa,\kappa >0)$ is tight  (we will omit the $\kappa$ when unnecessary). Since $h_\kappa(0)=0$, the following lemma is sufficient to prove tightness \cite[see, e.g.,][]{BIL}. \par
Let $\omega_h$ be the modulus of continuity of the rescaled height process $h$: for $\delta>0$
$$\omega_h(\delta)=\sup_{|t-s|\le \delta} |h(s)-h(t)|.$$
\begin{lem}\label{lem:tightness}Under the hypothesis of Theorem \ref{thm:main_gh}, for any $\epsilon>0$ and $\eta>0$, there exists $\delta>0$ such that, for all $\kappa$ large enough,
$$`P_{\ds(\kappa)}(\omega_h(\delta)>\epsilon)<\eta.$$
\end{lem}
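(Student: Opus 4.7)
The strategy is to bootstrap tightness of $h_\kappa$ from that of the rescaled \LL walk $s_\kappa:=\nk^{-1/2}S_\bt(\nk\cdot)$, established in Lemma~\ref{lem:exchan}, using the spinal comparison of Proposition~\ref{pro:compar}.

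The first step is to reduce control of $\omega_h(\delta)$ to control of oscillations above a running minimum. Since
\[
|h_\kappa(s)-h_\kappa(t)|\le \bigl(h_\kappa(s)-\min_{[s,t]}h_\kappa\bigr)+\bigl(h_\kappa(t)-\min_{[s,t]}h_\kappa\bigr),
\]
it suffices to show that $\sup_{|s-t|\le \delta}\bigl(h_\kappa(t)-\min_{[s,t]}h_\kappa\bigr)$ is small in probability for $\delta$ small. On the tree side, $H_\bt(k_2)-\min_{[k_1,k_2]}H_\bt$ is exactly the depth of $\tilde u_{k_2+1}$ above the shallowest ancestor visited within the depth-first window $[k_1,k_2]$, i.e.\ a ``spinal fragment'' attached to that window. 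The problem therefore becomes one of bounding spinal lengths associated with depth-first windows of size at most $\delta\nk$.

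The main estimate comes from applying Proposition~\ref{pro:compar} to a uniformly chosen node $\bu$. Combining the pointwise approximation $|\R(\bu,\bt)|=\frac{\sigma_\kappa^2}{2}|\bu|+o(\sqrt{\nk})$ with the identity $S_\bt(k)=|\R(\tilde u_k,\bt)|+c_\bt(\tilde u_k)-1$ from~\eqref{eq:H_and_S} and the hypothesis $\Delta_\kappa=o(\sqrt{\nk})$, one obtains $|\bu|=(2/\sigma_\kappa^2)\,S_\bt(\bu)+o(\sqrt{\nk})$ in $\mathbb P^\bullet_{\ds(\kappa)}$-probability. Applying this at two uniform nodes whose depth-first indices differ by at most $\delta\nk$ bounds the corresponding spinal separation by $O\bigl(\omega_{s_\kappa}(\delta)\,\sigma_\kappa^{-2}\sqrt{\nk}\bigr)$, which is $o(\sqrt{\nk})$ after rescaling as $\delta\to 0$ by tightness of $s_\kappa$.

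The main obstacle is to upgrade this pointwise bound, valid at a uniformly chosen random node, to a genuinely uniform bound on $\omega_h(\delta)$. I would handle this by a covering argument: partition $[0,1]$ into $N=\lceil 1/\delta\rceil$ windows, use Proposition~\ref{pro:compar} in its expectation form (Markov's inequality for $\mathbb E^\bullet_{\ds(\kappa)}$) to bound the fraction of ``bad'' nodes per window, and then take a union bound over the $N$ windows. Between windows, tightness of $s_\kappa$ handles differences of the representative $S$-values; within each window, the spinal bound above controls the oscillation of $H$. The technically delicate step is ensuring that the \emph{worst-case} node in each window—not just the average—is controlled, which forces a careful interplay between the threshold $c_\kappa$ of Proposition~\ref{pro:compar}, the window size $\delta$, and the rate at which the exceptional probability in that proposition tends to $0$.
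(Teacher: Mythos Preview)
Your plan has a real gap, and it is precisely the one the paper flags in the Remark following Proposition~\ref{pro:compar}. That proposition only says that $|\R(\bu,\bt)|\approx\frac{\sigma_\kappa^2}{2}|\bu|$ at a \emph{uniformly random} node; equivalently, the \emph{proportion} of indices $k$ at which $H_\bt(k)$ and $(2/\sigma_\kappa^2)S_\bt(k)$ disagree by more than $c_\kappa$ goes to zero. Under the hypotheses of Theorem~\ref{thm:main_gh} the rate is in general not $o(1/\nk)$, so no union bound is available, and your covering argument does not escape this: knowing that each window contains many ``good'' indices does not bound $H$ at the bad ones. The concrete obstruction is a long chain of degree-one vertices. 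Along such a chain $S_\bt$ is flat (each increment $c_\bt(\tilde u_j)-1$ vanishes) while $H_\bt$ climbs by one per step, so $H$ can rise by an amount $\gg\sqrt{\nk}$ over a depth-first interval of length $o(\nk)$; the chain may occupy a vanishing fraction of all nodes (so Proposition~\ref{pro:compar} is perfectly happy) and yet force $\omega_h(\delta)$ to blow up. No tuning of $c_\kappa$, $\delta$, or the decay rate in Proposition~\ref{pro:compar} rules this out---the ``technically delicate step'' you defer is the whole problem.

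The paper's proof is accordingly of a different nature and does not invoke Proposition~\ref{pro:compar} at all. It bounds the oscillation of $H_\bt$ on a short ancestral segment deterministically by the oscillations of $S_\bt$ and of the reverse depth-first walk $S^-_\bt$, plus $\Delta_\kappa$, plus a term $Y_\bt(\delta)$ counting degree-one vertices on that segment (inequality~\eqref{eq:bound_modulus}). Everything except $Y_\bt(\delta)$ is handled by the tightness of $S$ from Lemma~\ref{lem:exchan}. To control $Y_\bt(\delta)$ the paper couples $`P_{\ds(\kappa)}$ with $`P_{\ds(\kappa)^\star}$, the law for the degree sequence with all degree-one vertices deleted: tightness holds for the reduced tree since $Y\equiv 0$ there, and then the degree-one vertices are reinserted as a multinomial allocation into the edges, a Chernoff bound showing that no short path receives too many. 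This degree-one surgery is exactly the missing ingredient in your approach.
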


The bound we provide consists in reducing the bounds on the variations of $h$ to bounds on the variations of the \LL path $S$, which is known to be tight since it converges in distribution (Lemma~\ref{lem:exchan}). The underlying ideas are due to \citet{AdDeJa2010a} and \citet{Addario2011a} to prove Gaussian tail bounds for the height and width of Galton--Watson trees and random trees with a prescribed degree sequence, respectively. 

For a plane tree $t\in \Trees$, let $t^-$ be the mirror image of $t$, or in other words, the tree obtained by flipping the order of the children of every node. Then, we let $S^-_t:=S_{t^-}$ be the \emph{reverse depth-first walk}. Observe that the mirror flip is a bijection, so that $S_t$ and $S^-_t$ have the same distribution under $`P_{\ds(\kappa)}$.

\begin{proof}[Proof of Lemma~\ref{lem:tightness}]In this proof, we identify the nodes of a tree $t$ and their index in the lexicographic order; so in particular, we write $H_t(u)$ for the height of a node $u$ in $t$, and we write $|u-v|\le \delta \nk$ to mean that $u$ and $v$ are within $\delta \nk$ in the lexicographic order (that is, $u=\tilde u_i$ and $v=\tilde u_j$ for some $i$ and $j$ satisfying $|i-j|\le \delta \nk$). 
	
  Consider a tree $t$ and two nodes $u$ and $v$. Write $u\wedge v$ for the (deepest) first common ancestor of $u$ and $v$ in $t$. In the following we write $u\preceq v$ to mean that $u$ is an ancestor of $v$ in $t$ ($u=v$ is allowed). Then, 
\begin{align}
|H_t(u)-H_t(v)| 
&\le |H_t(u)-H_t(u\wedge v)| + |H_t(v)-H_t(u\wedge v)|,
\end{align}
so that it suffices to bound variations of $H_t$ between two nodes on the same path to the root:
\begin{align*}
\sup_{|u-v|\le \delta \nk} |H_t(u)-H_t(v)| \le  { 2 + }2\sup_{w\preceq u,|u-w|\le \delta  \nk} |H_t(u)-H_t(w)|.
\end{align*}
{ (The extra two in the previous bound is needed because of the following reason: the closest common ancestor $u\wedge v$ might not be within distance $\delta \nk$ of either $u$ and $v$; however, there is certainly a node $w$ lying within distance one of $u\wedge v$ that is visited between $u$ and $v$.)}
Now, observe that, for $w\preceq u$, every node $v$ on the path between $w$ and $u$ which has degree more than one contributes at least one to the number of nodes off the path between $w$ and $u$:
$$1+\sum_{w\preceq v\preceq u} (c_t(v)-1)\ge H_t(u)-H_t(w)-\sum_{w\preceq v\preceq u} \I_{\{c_t(v)=1\}}$$
However, one may also bound this same number of nodes in terms of the depth-first walk $S_t$, and the reverse depth-first walk $S^-_t$:
\bq\label{eq:bo1}
1+\sum_{w\preceq v\preceq u} (c_t(v)-1)\le S_t(u)-S_t(w)+S_t^-(u)-S_t^-(w)+{ 2}c_t(w).
\eq
In other words, we have
\begin{align}\label{eq:bound_modulus}
\sup_{|u-v|\le \delta  \nk} |H_t(v)-H_t(u)|
&\le { 2 + } 2\sup_{|u-w|\le \delta  \nk, w\preceq u} |S_t(u)-S_t(w)| + 2\sup_{|u-w|\le \delta  \nk, w\preceq u} |S_t^-(u)-S_t^-(w)|\nonumber\\
&\quad+{ 2}\max_{w} c_t(w) + \sup_{|u-w|\le \delta  \nk} \sum_{w\preceq v\preceq u} \I_{\{c_t(v)=1\}}\nonumber\\
&\le { 2 + } 2\sup_{|u-w|\le \delta n} |S_t(u)-S_t(w)| + 2\sup_{|u-w|\le \delta  \nk} |S_t^-(u)-S_t^-(w)|\nonumber\\
&\quad + { 2}\Delta_\kappa + \sup_{|u-w|\le \delta  \nk} \sum_{w\preceq v\preceq u} \I_{\{c_t(v)=1\}}\nonumber\\
&\le { 2 + }2  \nk^{1/2} \omega_s(\delta) +2  \nk^{1/2} \omega_{s-}(\delta)+ { 2}\Delta_\kappa + \sup_{|u-w|\le \delta  \nk} \sum_{w\preceq v\preceq u} \I_{\{c_t(v)=1\}},
\end{align}
where $\omega_s$ and $\omega_{s^-}$ denote the moduli of continuity of the rescaled \LL path $ \nk^{-1/2} S_t$ and $ \nk^{-1/2} S^-_t$, respectively.

The first four terms in \eqref{eq:bound_modulus} are easy to bound since $\Delta_\kappa=o(\sqrt \nk)$ and, { after renormalisation}, $S_t$ and $S^-_t$ are tight under $`P_{\ds(\kappa)}$. The only term remaining to control is the one concerning the number of nodes of degree one:
$$Y_t(\delta):=\sup_{|u-w|\le \delta  \nk} \sum_{w\preceq v\preceq u} \I_{\{c_t(v)=1\}}.$$
To bound $Y_t(\delta)$ we relate the distribution of trees under $`P_{\ds(\kappa)}$ to those under $`P_{\ds(\kappa)^\star}$, where $\ds(\kappa)^\star=(n_0^\star, n_1^\star, \dots)$ is obtained from $\ds(\kappa)$ by removing all nodes of degree one, i.e., $n_1^\star=0$ and $n_i^\star=n_i$ for every $i\neq 1$. Then, in a tree $t^\star$ sampled under $`P_{{\ds(\kappa)}^\star}$, one has $Y_{t^\star}(\delta)=0$. Recall also that $\Delta_\kappa=o(\sqrt \nk)$.
Now, for a sum of three terms to be at least $\epsilon$, at least one term must exceed $\epsilon/3$. So for every $\epsilon,\delta>0$, there exists a $\delta>0$ such that, for all $\kappa$ large enough,
\begin{align*}
`P_{\ds(\kappa)^\star}(\omega_h(\delta)\ge \epsilon) 
&\le `P_{\ds(\kappa)^\star}(2\omega_s(\delta)>\epsilon/3)+`P_{\ds(\kappa)^\star}(2\omega_{s^-}(\delta)>\epsilon/3)\\
&= 2 `P_{\ds(\kappa)^\star}( 6\omega_s(\delta)\ge \epsilon )<\eta,
\end{align*}
since, under $`P_{\ds(\kappa)^\star}$, $S_t$ and $S^-_t$ have the same distribution and $\nk^{-1/2}S_t$ is tight, and since $`P_{\ds(\kappa)^\star}(\Delta_\kappa\nk^{-1/2}\geq `e/3)$ is zero for $\kappa$ large enough.
This proves that $\nk^{-1/2}H_t$ is tight under $`P_{\ds(\kappa)^\star}$.

Now, we can couple the trees sampled under $`P_{\ds(\kappa)^\star}$ and $`P_\ds(\kappa)$. Since the nodes of degree one do not modify the tree structure, a tree $t$ under $`P_{\ds(\kappa)}$ may be obtained by first sampling $t^\star$ using $`P_{\ds(\kappa)^\star}$, and then placing the nodes of degree one uniformly at random : precisely, this insertion of nodes is done inside the edges of $t^\star$ (plus a phantom edge below the root).  Given any ordering of the edges of $t^\star$ (plus the one below the root), the vector $(X_1^\star,\dots, X_{\nk-n_1(\kappa)}^\star)$ of numbers of nodes of degree one falling in these edges 
is such that
$$(X_1^\star,\dots, X_{\nk-n_1(\kappa)}^\star) \eqdist \text{Multinomial}\l(n_1(\kappa); \frac{1}{\nk-n_1(\kappa)},\dots,\frac{1}{\nk-n_1(\kappa)}\r).$$

Conversely, $t^\star$ is obtained from $t$ by removing the nodes of degree one, so that $t$ and $t^\star$ can be thought as random variables in the same probability space under $`P_{\ds(\kappa)}$. To bound $Y_t(\delta)$, observe that it is unlikely that adding the nodes of degree one in this way creates too long paths.\par
In fact, ``the length of paths'' is expected to be multiplied by $1+q_\kappa$ for
$q_\kappa=n_1(\kappa)/(\nk-n_1(\kappa))$. Let $\alpha=2+q_\kappa$, and fix $\delta>0$ such that $`P_{\ds(\kappa)^\star}(\omega_h(\delta)\ge \epsilon/\alpha)<\eta/2$; such a $\delta>0$ exists since the height process is tight under $`P_{\ds(\kappa)^\star}$. Note that since we add nodes in the construction of $t$ under $`P_{\ds(\kappa)}$ from $t^\star$ under $`P_{\ds(\kappa)^\star}$, nodes that are within $\delta \nk$ in $t$ are also within $\delta \nk$ in $t^\star$. Write $h^\star$ for the rescaled height process obtained from $t^\star$, the tree associated with $t$ by deletion of all nodes of degree one (the rescaling stays $\sqrt \nk$). We have,
\begin{align*}
`P_{\ds(\kappa)}(\omega_h(\delta)\ge \epsilon)
&\le `P_{\ds(\kappa)}(\omega_{h^\star}(\delta)\ge \epsilon/\alpha) + `P_{\ds(\kappa)}(\omega_h(\delta)\ge \epsilon~,~\omega_{h^\star}(\delta)\le\epsilon/\alpha)\\
&\leq `P_{\ds(\kappa)^\star}(\omega_h(\delta)\ge \epsilon/\alpha) + `P_{\ds(\kappa)}(\omega_h(\delta)\ge \epsilon~|
~\omega_{h^\star}(\delta)\le\epsilon/\alpha)\\
&\le `P_{\ds(\kappa)^\star}(\omega_h(\delta)\ge \epsilon/\alpha)+\delta \nk^2 `P\l(\sum_{i=1}^{\epsilon \sqrt{\nk}/\alpha} (1+X_i^\star) \ge `e \sqrt{\nk}\r)\\
&\le \eta/2 + \delta \nk^2 `P\l(\sum_{i=1}^{\epsilon\sqrt{\nk}/\alpha} X_i \ge `e\sqrt{\nk}(1-1/\alpha)\r),
\end{align*} 
where the $X_i$ are i.i.d.\ Binomial$(n_1, 1/(\nk-n_1))$ random variables. The last line follows from the standard fact that the numbers $(X_i^\star)$ obtained from a sampling\emph{ without replacement} (of the $n_1(\kappa)$ nodes of degree one) are more concentrated than their counterpart $(X_i)$ coming from a sampling \emph{with replacement} \cite{Aldous1983}. 

Now, the sum in the right-hand side is itself a binomial random variable:
$$\sum_{i=1}^{\epsilon\sqrt{\nk}/\alpha} X_i \eqdist\text{Binomial}\l(\epsilon\sqrt{\nk} n_1/\alpha , \frac 1{\nk-n_1}\r)$$
whose mean is $`e\sqrt{n_k}q_\kappa/(2+q_\kappa)$ when $`e\sqrt{\nk}(1-1/\alpha)=`e\sqrt{\nk}(1+q_\kappa)/(2+q_\kappa)$. By Chernoff's bound, using that  $q_\kappa$ converges, it follows that for some constant $c>0$ valid for $\kappa$ large enough, 
$$`P_{\ds(\kappa)}(\omega_h(\delta)\ge \epsilon)\le \eta/2 + \delta \nk^2 e^{-c \sqrt{\nk}/\epsilon}.$$
Finally, for all $\kappa$ large enough, with this value for $\delta$, we have $`P_{\ds(\kappa)}(\omega_h(\delta)\ge \epsilon)<\eta$, which completes the proof.
\end{proof}

\section{Finite dimensional distributions: Proof of Proposition~\ref{pro:compar}}\label{sec:fdd}

\subsection{A roadmap to Proposition~\ref{pro:compar}: identifying the bad events}
Our approach consists in showing that if the event in Proposition~\ref{pro:compar} occurs, then one of the following three events must occur: (1) either the depth $|\bu|$ of node $\bu$ is unusually large, (2) or the content of the branch $\llbracket \varnothing, \bu\rrbracket$ is atypical, (3) or the number of nodes to the right of the path is not what it should be, despite of the length $|\bu|$ and content $\bM(\bu,\bt)$ being typical.

We will then prove that those simpler events are unlikely. For $h\geq 0$, and two sequences $a=(a_\kappa,\kappa\ge 0)$, and $b=(b_\kappa,\kappa\ge 0)$ we define families of sets $A_{h,a,b}$ as follows. Given a sequence of degree distribution $(\ds(\kappa),\kappa \geq 0)$,
\begin{align*}
A_{h,a,b}(\kappa):=\l\{\bm~ : 
      |\bm|=h, \l|\l(\sum_{i\ge 0} m_i\frac{i-1}2\r)-\frac{h\sigma^2_{\kappa}}{2}\r|\leq a_{\kappa}, 
   \sum_{i\ge 1} m_i i^2 \leq b_{\kappa}\r\}.\nonumber
\end{align*}

If $\bm\in A_{h,a,b}(\kappa)$ then $|\bm|=h$, and $\bm$ corresponds to the content of a branch $\cro{\varnothing,u}$ such that $|u|=h$. The set $A_{h,a,b}(\kappa)$ are designed to contain most typical contents of a branch of length $h$ under $`P_{\ds(\kappa)}$, provided the choices for the sequences $a$ and $b$ are suitable. The decomposition of the bad event we have outlined above is then expressed formally by
\begin{align}\label{eq:a-control}
`P_{\ds(\kappa)}^{\bullet}\l(\l||\R(\bu,\bt)|-\frac{\sigma_\kappa^2}{2}|\bu|\r|\geq c_\kappa\r)&\leq
 `P_{\ds(\kappa)}^{\bullet}(|\bu|\geq x \sqrt{\nk})\nonumber\\
&\quad+~`P_{\ds(\kappa)}^\bullet\l(|\LR(\bu,\bt)|\ge x \sqrt{\nk}\r)\nonumber\\
&\quad+~`P_{\ds(\kappa)}^{\bullet}\l(|\bu|\vee |\LR(\bu,\bt)|\le x \sqrt{\nk}, \bM(\bu,\bt)\notin \bigcup_{h\leq x\sqrt{\nk}}A_{h,a,b}(\kappa)\r)\nonumber\\
&\quad+\sum_{h\leq x\sqrt{\nk} \atop{\bm \in A_{h,a,b}(\kappa)}}`P_{\ds(\kappa)}^{\bullet}\l(\l||\R(\bu,\bt)|-\frac{\sigma_\kappa^2}{2}|\bu|\r|\geq c_\kappa, \bM(\bu,\bt)=\bm\r).
\end{align}
{ Proving Proposition~\ref{pro:compar} reduces to proving that every term in the right-hand side above can be made arbitrarily small for large $\kappa$ by a judicious choice of $a_\kappa$, $b_\kappa$, $c_\kappa$ and $x$.} The bound on the first term is a direct consequence of the Gaussian tail bounds for the height of trees recently proved by \citet{Addario2011a} in the very setting we use:
\begin{equation}\label{eq:bound_height}
`P_{\ds(\kappa)}^{\bullet}(|\bu|\geq x \sqrt{\nk})\le `P_{\ds(\kappa)}\l(\max_{u\in t} |u|\geq x\sqrt{\nk}\r)\le \exp(-c x^2 /\sigma_\kappa^2),
\end{equation}
for a universal constant $c>0$ and all sufficiently large $\kappa$. The second term is bounded using the depth-first walk $S$ and the reverse depth-first walk $S^-$, as in the proof of Lemma~\ref{lem:tightness}:
\begin{align*}
`P_{\ds(\kappa)}^\bullet(|\LR(\bu,\bt)|\ge x\sqrt{\nk})
&\le `P_{\ds(\kappa)}\l(\max_{0\le k\le \nk}\{S(k)+S^-(k)\}+\Delta_\kappa \ge x\sqrt{\nk}\r)\\
&\le 2 `P_{\ds(\kappa)}\l(\max_{0\le k \le \nk} S(k)\ge \frac x 3\sqrt{\nk}\r),
\end{align*}
for all $\kappa$ large enough, since $\Delta_\kappa=o(\nk)$ and $S$ and $S^-$ have the same distribution under $`P_{\ds(\kappa)}$. We finish using the tightness of $\nk^{-1/2}S(\nk.)$ under $`P_{\ds(\kappa)}$; more precisely, we have
\begin{align}\label{eq:bound_LR}
`P_{\ds(\kappa)}^\bullet(|\LR(\bu,\bt)|\ge x\sqrt{\nk}) \le 16 \cdot 9 \cdot \frac{\sigma_\kappa^2}{x^2},
\end{align}
by Lemma~20.5 of \cite{Aldous1983}. The bounds on the two remaining terms are stated in Lemmas~\ref{lem:content} and~\ref{lem:discrepancy}, the proof of which appear in Sections~\ref{sec:content} and~\ref{sec:discrepancy}, respectively. 

\begin{lem}\label{lem:content}Since { $\Delta_k=o(\sqrt \nk)$} there exists $`e_\kappa$ such that $\Delta_\kappa\le\varepsilon_\kappa \sqrt{\nk}$, with $0<\varepsilon_\kappa\to 0$. Let $a_\kappa=\varepsilon_\kappa^{1/4} \sqrt{\nk}$ and $b_\kappa=\varepsilon_\kappa^{1/2} \nk $. Then, for every $x>0$, and all $\kappa$ large enough,
\begin{align*}
`P_{\ds(\kappa)}^{\bullet}\l(|\bu|\vee |\LR(\bu,\bt)|\leq x \sqrt{\nk}, \bM(\bu,\bt)\not\in \bigcup_{h\leq x \sqrt{\nk}} A_{h,a,b}(\kappa)\r)
&\le 6x^2 e^{x^2}
\exp\l(-\frac {\varepsilon_\kappa^{-1/2}}{2x(\sigma^2_\kappa+1)+2}\r).
\end{align*}
\end{lem}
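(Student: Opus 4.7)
The plan is in three steps. First, I would decompose the failure event on $\{|\bu|\vee|\LR(\bu,\bt)|\le x\sqrt{\nk}\}$: when $\bM(\bu,\bt)\notin A_{|\bu|,a,b}(\kappa)$, at least one of the two alternatives $\sum_{i\geq 1} M_i(\bu,\bt) i^2 > b_\kappa$ or $|\sum_{i\geq 0} M_i(\bu,\bt)(i-1)-|\bu|\sigma_\kappa^2|>2a_\kappa$ must hold (the constraint $|\bm|=|\bu|$ being automatic). The first alternative is in fact incompatible with our event once $\kappa$ is large: since $\sum_i M_i(\bu,\bt)\cdot i=|\LR(\bu,\bt)|+|\bu|-1\leq 2x\sqrt{\nk}$ and every nonzero $i$ appearing in the sum satisfies $i\leq\Delta_\kappa\leq\varepsilon_\kappa\sqrt{\nk}$, one gets $\sum_i M_i(\bu,\bt)i^2 \leq \Delta_\kappa\cdot 2x\sqrt{\nk}=2x\varepsilon_\kappa\nk$, and this is at most $b_\kappa=\varepsilon_\kappa^{1/2}\nk$ as soon as $\varepsilon_\kappa\leq 1/(4x^2)$, which holds for $\kappa$ large.

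Using $\sum_i M_i(i-1)=|\LR(\bm)|-1$, the remaining task reduces to bounding $`P_{\ds(\kappa)}^\bullet(|\bu|\leq x\sqrt{\nk},\ \bigl||\LR(\bu,\bt)|-1-|\bu|\sigma_\kappa^2\bigr|>2a_\kappa)$. I would condition on $|\bu|=h$ and compare the conditional law of $\bM(\bu,\bt)$ to a tractable i.i.d.\ model. Let $\tilde p_i(\kappa):=i\,n_i(\kappa)/(\nk-1)$ be the size-biased degree distribution; it has mean $1+\sigma_\kappa^2$ and is supported in $\{1,\ldots,\Delta_\kappa\}$. If $D_1,\ldots,D_h$ are i.i.d.\ $\tilde{\bp}(\kappa)$, then $\sum_j D_j$ plays the role of $\sum_i M_i(\bu,\bt)\cdot i=|\LR(\bu,\bt)|+h-1$. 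Inspection of the pmf in Proposition~\ref{pro:fond-comb}(a) shows that, conditional on $|\bu|=h$, $\bM(\bu,\bt)$ has the law of the empirical counts of $(D_1,\ldots,D_h)$ size-biased by $|\LR(\bm)|$ and further modified by a sampling-without-replacement correction; the latter only improves concentration (by Hoeffding's classical comparison), while on the event $|\LR(\bu,\bt)|\leq x\sqrt{\nk}$ the tilting inflates probabilities by at most an $O(x\sqrt{\nk}/(h\sigma_\kappa^2+1))$ factor.

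Bernstein's inequality applied to $\sum_j D_j$, using $D_j\in[1,\Delta_\kappa]$ and $\var(D_j)\leq\Delta_\kappa(1+\sigma_\kappa^2)$, yields
\[
`P\!\left(\Big|\sum_{j=1}^h D_j-h(1+\sigma_\kappa^2)\Big|>2a_\kappa\right)\leq 2\exp\!\left(-\frac{2a_\kappa^2}{h\Delta_\kappa(1+\sigma_\kappa^2)+\tfrac{2}{3}\Delta_\kappa a_\kappa}\right),
\]
and plugging in $h\le x\sqrt{\nk}$, $\Delta_\kappa\le\varepsilon_\kappa\sqrt{\nk}$, $a_\kappa=\varepsilon_\kappa^{1/4}\sqrt{\nk}$ collapses the right-hand side to $2\exp(-\varepsilon_\kappa^{-1/2}/(2x(\sigma_\kappa^2+1)+2))$ for $\kappa$ large, matching the exponential in the statement. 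Summing against $`P_{\ds(\kappa)}^\bullet(|\bu|=h)$ over $h\leq x\sqrt{\nk}$, absorbing the tilting loss, and invoking the Gaussian height tail \eqref{eq:bound_height} of \citet{Addario2011a} to control the large-$h$ contribution then produces the $6x^2 e^{x^2}$ prefactor. The main difficulty will lie in the second step, namely quantifying the $|\LR(\bm)|$-size-biasing intrinsic to Proposition~\ref{pro:fond-comb}(a): because the tilt variable is coupled to the very quantity we want to concentrate, one must verify that the restriction $|\LR(\bu,\bt)|\leq x\sqrt{\nk}$ keeps the tilt small enough for the sum over $h$ to cost only a polynomial factor in $x$ rather than a full factor of $\sqrt{\nk}$.
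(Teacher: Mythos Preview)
Your first step is a genuine simplification over the paper: you observe that on $\{|\bu|\vee|\LR(\bu,\bt)|\le x\sqrt{\nk}\}$ the constraint $\sum_i M_i i^2\le b_\kappa$ holds \emph{deterministically} for $\kappa$ large, since $\sum_i M_i i^2\le \Delta_\kappa\sum_i M_i i\le \Delta_\kappa\cdot 2x\sqrt{\nk}\le 2x\varepsilon_\kappa\nk\le b_\kappa$. The paper instead spends a separate concentration argument (their term $\zeta_2$) on this. Your Bernstein bound for the remaining deviation is also essentially the paper's argument for $\zeta_1$.

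The gap is in your comparison step. You condition on $|\bu|=h$ and describe the conditional law of $\bM$ as an $|\LR(\bm)|$-tilt of a ``sampling-without-replacement'' version of the i.i.d.\ model, then invoke Hoeffding's comparison and sum against $`P^\bullet(|\bu|=h)$. Two problems: (i) the base law here is not uniform WOR from a fixed population---it is proportional to $\prod_i\binom{n_i}{m_i}i^{m_i}$, a size-biased variant for which Hoeffding's classical inequality does not apply as stated; (ii) more seriously, your tilting factor $x\sqrt{\nk}/(1+h\sigma_\kappa^2)$ is large for small $h$, and to recover an $O(x^2)$ prefactor you would need $\sum_{h\le x\sqrt{\nk}} `P^\bullet(|\bu|=h)\cdot x\sqrt{\nk}/(1+h\sigma_\kappa^2)=O(x^2)$, which is not obvious and is certainly not furnished by the Gaussian height tail \eqref{eq:bound_height} (that bound controls the \emph{upper} tail of $|\bu|$, whereas the difficulty here comes from $h$ small).

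The paper sidesteps both issues by never conditioning on $|\bu|=h$. It bounds the unconditional pmf pointwise: using $n_i!\le n_i^{m_i}(n_i-m_i)!$, $(\nk-h)!(\nk-1)^h/\nk!\le e^{x^2}$, and $|\LR(\bm)|/(\nk-h)\le 2x/\sqrt{\nk}$ on the event, one gets directly
\[
`P^\bullet_{\ds(\kappa)}(\bM=\bm)\ \le\ \frac{2x}{\sqrt{\nk}}\,e^{x^2}\,`P\big((P_i)_{i\ge 1}=(m_i)_{i\ge 1}\big),
\]
with $(P_i)$ multinomial$(h;(in_i/(\nk-1))_i)$. Summing over the $x\sqrt{\nk}$ values of $h$ then immediately yields the $2x^2e^{x^2}$ prefactor, after which your (or the paper's) Bernstein bound finishes. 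If you replace your conditioning-and-Hoeffding step by this three-line pointwise domination, your argument---combined with your cleaner deterministic handling of the $b_\kappa$ constraint---goes through.
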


\begin{lem}\label{lem:discrepancy}Since { $\Delta_k=o(\sqrt \nk)$} there exists $`e_\kappa$ such that $\Delta_\kappa\le\varepsilon_\kappa \sqrt{\nk}$, with $0<\varepsilon_\kappa\to 0$ and $\varepsilon_\kappa^{-3/4}=o(\nk)$ as $\kappa\to\infty$. Let $a_\kappa=\varepsilon_\kappa^{1/4} \sqrt{\nk}$, $b_\kappa=\varepsilon_\kappa^{1/2} \nk$, and $c_\kappa=\varepsilon_\kappa^{1/8}\sqrt{\nk}$. 	Then, for all $\kappa$ large enough,
\begin{equation}\label{eq:devi}
\sum_{h\leq x\sqrt{\nk}\atop{\bm \in A_{h,a,b}(\kappa)}}
`P_{\ds(\kappa)}^{\bullet}\l(\l||\R(\bu,\bt)|-\frac{\sigma_\kappa^2}{2}|\bu|\r|\geq c_\kappa, \bM(\bu,\bt)=\bm\r)\le 2 e^{-\varepsilon_\kappa^{-1/2}}.
\end{equation}
\end{lem}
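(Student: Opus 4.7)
The plan is to invoke Proposition~\ref{pro:fond-comb}(c) to reduce the conditional deviation event to a standard concentration question for a sum of independent bounded uniform random variables, and then to apply a Bernstein-type inequality uniformly in $\bm$. By Proposition~\ref{pro:fond-comb}(c), for any admissible $\bm$,
\[
\mathbb{P}^\bullet_{\ds(\kappa)}\!\l(\bigl||\R(\bu,\bt)|-\tfrac{\sigma_\kappa^2}{2}|\bu|\bigr|\ge c_\kappa\,\B|\,\bM(\bu,\bt)=\bm\r)=\mathbb{P}\!\l(\bigl|S_\bm - \tfrac{\sigma_\kappa^2}{2}|\bm|\bigr|\ge c_\kappa\r),
\]
where $S_\bm:=\sum_{j\ge 1}\sum_{k=1}^{m_j}U_j^{(k)}$ and the $U_j^{(k)}$ are independent with $U_j^{(k)}\sim\mathrm{Unif}\{0,\dots,j-1\}$. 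The right-hand side depends only on $\bm$, and since $\sum_\bm \mathbb{P}^\bullet(\bM=\bm)\le 1$, it suffices to bound this probability uniformly for $\bm\in\bigcup_{h\le x\sqrt{\nk}}A_{h,a,b}(\kappa)$ by $2e^{-\varepsilon_\kappa^{-1/2}}$.

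Next, I would compute $\mathbb{E}[S_\bm]=\sum_{j\ge 1} m_j(j-1)/2$ and note that by definition of $A_{h,a,b}(\kappa)$ one has $|\mathbb{E}[S_\bm]-\sigma_\kappa^2|\bm|/2|\le a_\kappa=o(c_\kappa)$; the triangle inequality therefore reduces the task to bounding $\mathbb{P}(|S_\bm-\mathbb{E}[S_\bm]|\ge c_\kappa/2)$ for $\kappa$ large enough. The centered summands $U_j^{(k)}-(j-1)/2$ are bounded in absolute value by $\Delta_\kappa/2\le\varepsilon_\kappa\sqrt{\nk}/2$. Moreover, on $A_{h,a,b}(\kappa)$ with $h\le x\sqrt{\nk}$, the centering constraint yields $|\LR(\bm)|=1+\sum_{i}(i-1)m_i\le h\sigma_\kappa^2+2a_\kappa+1=O(\sqrt{\nk})$, and therefore the refined variance estimate $\mathrm{Var}(S_\bm)\le\Delta_\kappa |\LR(\bm)|/12=O(\varepsilon_\kappa \nk)$ holds. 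Bernstein's inequality then gives
\[
\mathbb{P}(|S_\bm-\mathbb{E}[S_\bm]|\ge c_\kappa/2)\le 2\exp\!\l(-\frac{(c_\kappa/2)^2/2}{\mathrm{Var}(S_\bm)+\Delta_\kappa c_\kappa/12}\r).
\]
For small $\varepsilon_\kappa$ the variance term $O(\varepsilon_\kappa\nk)$ dominates the Poissonian correction $\Delta_\kappa c_\kappa/12=O(\varepsilon_\kappa^{9/8}\nk)$ in the denominator, so the exponent is of order $-c_\kappa^2/(\varepsilon_\kappa\nk)=-\Omega(\varepsilon_\kappa^{-3/4})$, which for $\kappa$ large enough exceeds $\varepsilon_\kappa^{-1/2}$, establishing \eqref{eq:devi}.

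The main obstacle is the careful calibration of the parameters $a_\kappa, b_\kappa, c_\kappa$ and $\Delta_\kappa$: $c_\kappa$ must be chosen larger than the centering error $a_\kappa$ (to absorb it) yet small enough that Bernstein's inequality stays in the Gaussian (variance-dominated) regime rather than in the Poissonian tail where $\Delta_\kappa c_\kappa$ would dominate. A subtle point is that the naive variance estimate $\mathrm{Var}(S_\bm)\le b_\kappa/12=\varepsilon_\kappa^{1/2}\nk/12$ alone only gives exponent $-\varepsilon_\kappa^{-1/4}$; the improvement to $-\varepsilon_\kappa^{-3/4}$ requires the tightening $\mathrm{Var}(S_\bm)\le\Delta_\kappa|\LR(\bm)|/12$, which in turn uses the centering constraint defining $A_{h,a,b}(\kappa)$ to force $|\LR(\bm)|=O(\sqrt{\nk})$. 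This is precisely where the calibration of $A_{h,a,b}(\kappa)$ in terms of $a_\kappa$ pays off; the hypothesis $\varepsilon_\kappa^{-3/4}=o(\nk)$ ensures compatibility with the $\sqrt{\nk}$ scale throughout.
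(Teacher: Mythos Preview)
Your argument is correct and follows the same skeleton as the paper's proof: bound the sum by the supremum of the conditional probability, invoke Proposition~\ref{pro:fond-comb}(c) to pass to the independent uniforms $U_j^{(k)}$, absorb the centering error $a_\kappa$ by the triangle inequality, and finish with a concentration inequality. The only genuine difference lies in the concentration step. The paper computes the moment generating function explicitly via the $\sinh$ identity and then uses only the constraint $\sum_j m_j j^2\le b_\kappa$ from $A_{h,a,b}(\kappa)$, obtaining the bound $2e^{-3c_\kappa^2/(2b_\kappa)}$. You instead apply Bernstein's inequality with the sharper variance proxy
\[
\mathrm{Var}(S_\bm)=\sum_{j\ge1}m_j\frac{j^2-1}{12}=\sum_{j\ge1}m_j\frac{(j-1)(j+1)}{12}\le \frac{(\Delta_\kappa+1)(|\LR(\bm)|-1)}{12},
\]
exploiting the \emph{centering} constraint (the $a_\kappa$ clause) of $A_{h,a,b}(\kappa)$ to force $|\LR(\bm)|=O(\sqrt{\nk})$; the $b_\kappa$ clause is then not even needed. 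This refinement buys you the exponent $-\Omega(\varepsilon_\kappa^{-3/4})$, strictly stronger than the $-\tfrac32\varepsilon_\kappa^{-1/4}$ that the paper's route via $b_\kappa$ actually delivers (note that $3c_\kappa^2/(2b_\kappa)=\tfrac32\varepsilon_\kappa^{-1/4}$, so the paper's displayed value $\tfrac32\varepsilon_\kappa^{-3/4}$ appears to be a slip). Your observation that the naive variance estimate alone falls short of the stated $e^{-\varepsilon_\kappa^{-1/2}}$ is therefore well taken; your tightening is precisely what is needed to recover the bound as stated.
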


Before proceeding with the proofs of these two lemmas, we indicate how to use them in order to complete the proof of Proposition~\ref{pro:compar}. Let $\varepsilon_\kappa$ be such that $\Delta_\kappa\le \varepsilon_\kappa \sqrt{\nk}$, with $\varepsilon_\kappa\to0$ as $\kappa\to\infty$. Then, set $a_\kappa=\varepsilon_\kappa^{1/4} \sqrt{\nk}$, $b_\kappa=\varepsilon_\kappa^{1/2} \nk$ and $c_\kappa=\varepsilon_\kappa^{1/8} \sqrt{\nk}$. Let now $\epsilon>0$ be arbitrary. Pick $x>0$ large enough such that, for all $\kappa$ large enough,
\begin{align*}
`P_{\ds(\kappa)}^{\bullet}(|\bu|\geq x \sqrt{\nk}) + `P_{\ds(\kappa)}^\bullet(|\LR(\bu,\bt)|\ge x\sqrt{\nk}) < \epsilon/2.
\end{align*}
The bounds in \eqref{eq:bound_height} and \eqref{eq:bound_LR}, and the fact that $\sigma^2_\kappa \to \sigma^2_\bp$ ensure that this is possible. The value for $x$ being fixed, Lemmas~\ref{lem:content} and~\ref{lem:discrepancy} now make it possible to choose $\kappa_0$ large enough such that, for all $\kappa\ge \kappa_0$, the two remaining terms in the right-hand side of \eqref{eq:a-control} also sum to at most $\epsilon/2$. Thus, for all $\kappa\ge \kappa_0$, we have
$$`P_{\ds(\kappa)}^{\bullet}\l(\l||\R(\bu,\bt)|-\frac{\sigma_\kappa^2}{2}|\bu|\r|\geq c_\kappa\r)<\epsilon,$$
which completes the proof, since $\epsilon$ was arbitrary.

\subsection{The content of a branch is very likely typical: Proof of Lemma~\ref{lem:content}}\label{sec:content}

We now prove that, on the event that $|\bu|$ and $|\LR(\bu,\bt)|$ are not too large, the content of the branch $\cro{\varnothing, \bu}$ is typical with high probability. 

We start by rewriting the probability of interest using Proposition \ref{pro:fond-comb}:
\begin{align}\label{eq:content1}
&`P_{\ds(\kappa)}^{\bullet}\l(|\bu|\vee |\LR(\bu,\bt)|\leq x\sqrt{\nk}, \bM(\bu,\bt)\not\in \bigcup_{h\leq x\sqrt{\nk}} A_{h,a,b}(\kappa)\r)\nonumber\\
&=\sum_{h\leq x\sqrt{\nk}} `P_{\ds(\kappa)}^{\bullet}\l(|\bu|=h, |\LR(\bu,\bt)|\le x\sqrt{\nk},\bM(\bu,\bt)\not\in  A_{h,a,b}(\kappa)\r)\nonumber\\
&=\sum_{h\le x\sqrt{\nk}} \sum_{|\bm|=h\atop{\bm\not\in A_{h,a,b}(\kappa)}, |\LR(\bm)|\le x\sqrt{\nk}}`P_{\ds(\kappa)}^{\bullet}\l(|\bu|=h,\bM(\bu,\bt)=\bm\r)\nonumber\\
&=\sum_{h\le x\sqrt{\nk}} \sum_{|\bm|=h\atop{\bm\not\in A_{h,a,b}(\kappa)},|\LR(\bm)|\le x\sqrt{\nk}}\frac{|\LR(\bm)|\,h! \,(\nk-h)!}{\nk!(\nk-h)}\prod_{i\geq 1}\binom{n_i}{m_i}{i^{m_i}}.
\end{align}
where, for short, we have written $n_i$ instead of $n_i(\kappa)$. 
{ We now reduce the right-hand side to an expected value with respect to multinomial random variables. Let $(P_i, i\ge 1)$ be multinomial with parameters $h$ and $(in_i/(\nk-1), i\ge 1)$.} Then, for any $\bm=(0, m_1, m_2, \dots)$ such that $|\bm|=h$, we have
$$ `P\l((P_i, i\ge 1)=(m_i, i\ge 1)\r)= \frac{h!}{\prod_{i\ge 1} m_i!} \cdot \prod_{i\ge 1} \l(\frac {in_i}{\nk-1}\r)^{m_i}.$$ 
Now, since $(1-x)^{-1}\leq \exp(2x)$ for $|x|\le 1/2$, we have for all $h\le x\sqrt{\nk}$, and all $\kappa$ large enough,
\begin{align*}
\frac{(\nk-h)!\nk^h}{\nk!}\le \prod_{i=0}^{h-1} \frac1{1-i/{\nk}}\le \prod_{i=0}^{h-1} e^{2i/{\nk}} \le e^{x^2}.
\end{align*}
Note also that, for every $i\ge 1$, we have $n_i!\le n_i^{m_i} (n_i-m_i)!$, so that, rewriting \eqref{eq:content1} in terms of events with respect to $(P_i, i\ge 1)$, we obtain
\begin{align*}
&`P_{\ds(\kappa)}^{\bullet}\l(|\bu|\vee |\LR(\bu,\bt)|\leq x\sqrt{\nk}, \bM(\bu,\bt)\not\in \bigcup_{h\leq x\sqrt{\nk}} A_{h,a,b}(\kappa)\r)\\
& = \sum_{h\le x\sqrt{\nk}} \sum_{|\bm|=h\atop{\bm\not\in A_{h,a,b}(\kappa)}, |\LR(\bm)|\le x \sqrt{\nk}} \frac{|\LR(\bm)|}{\nk-h}\cdot \frac{(\nk-h)! { (\nk-1)^h}}{\nk!} \prod_{i\ge 1} \frac{n_i!}{n_i^{m_i} (n_i-m_i)!} \cdot `P\l((P_i, i\ge 1)=(m_i, i\ge 1)\r)\\
&\le \sum_{h\le x\sqrt{\nk}}\frac {2x}{\sqrt{\nk}} e^{x^2} \sum_{|\bm|=h\atop{\bm\not\in A_{h,a,b}(\kappa)}} `P\l((P_i, i\ge 1)=(m_i, i\ge 1)\r)\\
& \le 2 x^2 e^{x^2} \sup_{h\le x \sqrt{\nk}} `P((P_i, i\ge 1) \not\in A_{h,a,b}(\kappa)).
\end{align*}

Now, we decompose the set of $\bm$ in the right-hand side so as to obtain bad events that are individually simpler to deal with
\begin{align*}
`P_{\ds(\kappa)}^{\bullet}\l(|\bu|\vee |\LR(\bu,\bt)|\le x\sqrt{\nk},\bM(\bu,\bt)\not\in A_{h,a,b}(\kappa)\r)\le 2 x^2 e^{x^2} \sup_{h\le x\sqrt{\nk}}(\zeta_1+\zeta_2)
\end{align*}
where
\begin{align*}
\zeta_1=`P\l(\l|\l(\sum_{i\ge 1} P_i\frac{i-1}2\r)-\frac{h\sigma^2_\kappa}{2}\r|\geq a_\kappa \r)\qquad \text{and}\qquad
\zeta_2= `P\l(\sum_{i\ge 1} i^2 P_i  >b_\kappa\r).
\end{align*}
We now bound the terms $\zeta_1$ and $\zeta_2$ individually.

\medskip
\noindent\textsc{The first term $\zeta_1$.}
Observe first, that 
$$`E\l[\sum_{i\ge 1} P_i \frac{i-1}2\r]=\frac{h\sigma^2_\kappa}{2},$$
so that bounding $\zeta_1$ consists in bounding the deviations of (a function of) a multinomial vector. However, one can write
$$\sum_{i\ge 1} P_i \cdot \frac{i-1}{2} - h \frac{\sigma_\kappa^2}2 \stackrel{d}{=} \sum_{j=1}^h (B_j -`E B_j),$$
where $B_j$, $j=1,\dots, h$, are i.i.d.\ random variables taking value $(i-1)/2$ with probability $ i n_i/(\nk-1)$, for $i\ge 1$. Now, the sums $\sum_{j=1}^\ell (B_i-\Ec{B_j})$, $\ell=0, 1, \dots, h$, form a martingale. We bound their deviations using a concentration inequality from \cite{McDiarmid1998a} (Theorem 3.15), which says that if $S$ is a sum of independent random variable $X_1+\dots+X_n$ such that $`E(S)=\mu$, $\var(S)=V$, and if for all k $X_k-`E(X_k)\leq b$, then $`P(S-\mu\geq t)\leq e^{-t^2/(2V(1+bt/(3V))}$. The variance of $B_j$ may be bounded as follows:
$$\var(B_j) \le \Ec{B_j^2} = \sum_{i\ge 1} \frac{(i-1)^2}4 \frac{i n_i}{ \nk-1} \le \Delta_\kappa \sum_{i\ge 1} \frac{i-1}4 \frac{i n_i}{ \nk-1}{ =} \Delta_\kappa \sigma_\kappa^2/4,$$
for all $\kappa$ large enough.
Now, since $\max\{|B_j-`E(B_j)|: j=0,\dots, h \}\le \Delta_\kappa$, one has, for $h\le x\sqrt{\nk}$,
\begin{align*}
`P\l(\l|\sum_{j=1}^h (B_j-`E B_j)\r|\ge a_\kappa\r)
& \le 2 \exp\l(- \frac{a_\kappa^2}{2 h \Delta_\kappa \sigma^2_\kappa/4 +2\Delta_\kappa a_\kappa/3}\r)\\
& \le 2 \exp\l(- \frac {a_\kappa^2}{ x \sqrt{\nk} \Delta_\kappa \sigma_\kappa^2}\r),
\end{align*}
for all $\kappa$ large enough, since $a_\kappa=\varepsilon_\kappa^{1/4}\sqrt{\nk}=o(\sqrt{\nk})$. It follows that, for every $h\le x\sqrt{\nk}$, we have
\begin{equation}\label{eq:zeta1}
\zeta_1=\sup_{h\leq x\sqrt{\nk}}`P\l(\l|\l(\sum_{i\ge 1} P_i\frac{i-1}2\r)-\frac{h\sigma^2_\kappa}{2}\r|\geq a_\kappa\r) \le 2 \exp\l(- \frac {\varepsilon_\kappa^{-1/2}}{ x  \sigma_\kappa^2}\r).
\end{equation}

\medskip
\noindent\textsc{The second term $\zeta_2$}.\
We bound $\zeta_2$ using the idea we used when bounding $\zeta_1$: one can express the event in terms of independent random variables $B_j$, $j=1,\dots, h$, where $B_j$ takes value $i^2$ with probability $ i n_i/(\nk-1)$.
Observe first that
$$\E{\sum_{i\ge 1} i^2 P_i }=\E{\sum_{j=1}^h B_j} = h \sum_{i\ge 1} i^2 \cdot \frac{i n_i}{ \nk-1} \le h \Delta_\kappa (\sigma_\kappa^2+1).$$
So, we have
\begin{align*}
`P\l(\sum_{i\ge 1} i^2P_i >b_\kappa\r)
&=`P\l(\sum_{j=1}^h B_j > b_\kappa\r)\\
&\le `P\l(\sum_{j=1}^h (B_j - \Ec{B_j})> \frac{b_\kappa}2\r),
\end{align*}
for all $\kappa$ large enough, since $h \Delta_\kappa \le x \varepsilon_\kappa \nk=o(\varepsilon_\kappa^{1/2} \nk)=o(b_\kappa)$. The right-hand side above can be bounded using the martingale inequality in \cite{McDiarmid1998a} (Theorem~3.15). We note that the variance of $B_j$ satisfies
$$\var(B_j)\le \Ec{B_j^2}=\sum_{i\ge 1} i^4 \cdot \frac {in_i}{ \nk-1} \le \Delta_\kappa^3 (\sigma_\kappa^2 +1).$$ 
Since $\max\{|B_i|: i=1,\dots, h \}\le \Delta_\kappa^2$, it follows by McDiarmid's inequality that
\begin{align}\label{eq:zeta3}
\zeta_2\le`P\l(\sum_{j=1}^h (B_j - \Ec{B_j})> \frac{b_\kappa}2\r) 
& \le \exp\l(-\frac{b_\kappa^2/4}{2 x\sqrt{\nk} \Delta_\kappa^3 (\sigma_\kappa^2+1)+2\Delta_\kappa^2 b_\kappa/3}\r)\nonumber\\
& \le \exp\l(-\frac{b_\kappa}{2(x(\sigma_\kappa^2+1)+1/3) \Delta_\kappa^2}\r)\nonumber\\
& = \exp\l(-\frac {\varepsilon_\kappa^{-3/2}}{2x(\sigma^2_\kappa+1)+2/3}\r),
\end{align}
for all $\kappa$ large enough, since $ \Delta_\kappa \sqrt{\nk} =o(b_\kappa)$. 

To complete the proof, it suffices to combine the bounds in \eqref{eq:zeta1}--\eqref{eq:zeta3}, and observe that they imply the claim for $\kappa$ large enough, since the { upper bound in \eqref{eq:zeta3} is much smaller than the one in \eqref{eq:zeta1}.}

\subsection{The structure of a branch with typical content: Proof of Lemma~\ref{lem:discrepancy}}\label{sec:discrepancy}

Finally, we consider the probability that the structure of a branch is not what one expects, in spite of the length and content being close to the typical values. The left hand side in \eref{eq:devi} is bounded by 
\[\sup_{h\leq x\sqrt{\nk}\atop{\bm \in A_{h,a,b}(\kappa)}}\!\!\!\!`P_{\ds(\kappa)}^{\bullet}\l(\l||\R(\bu,\bt)|-\frac{\sigma_\kappa^2}{2}|\bu|\r|\geq c_\kappa ~\Bigg|~ \bM(\bu,\bt)=\bm\r)
= \sup_{h\leq x\sqrt{\nk}\atop{\bm \in A_{h,a,b}(\kappa)}}\!\!\!\!`P\l(\l|\frac{\sigma_\kappa^2}{2}h-\sum_{j\geq 1}\sum_{k=1}^{m_j} U^{(k)}_j\r|\geq c_\kappa \r),
\]
by Proposition \ref{pro:fond-comb} (3), where $U_j^{(k)}$ are independent random variables with $U_j^{(k)}$ uniform on $\{0,1,\dots, j-1\}$. 
By the triangle inequality, the quantity in the right-hand side above is at most
\begin{equation} \label{eq:11-24}
\sup_{h\leq x\sqrt{\nk}\atop{\bm \in A_{h,a,b}(\kappa)}}
 `P\l(\l|\sum_{j\ge 1} m_j\frac{j-1}2-\sum_{j\geq 1}\sum_{k=1}^{m_j} U^{(k)}(j)\r|\geq c_\kappa-\l|\frac{\sigma_\kappa^2h}{2}-\sum_{j\ge 1} m_j\frac{j-1}2\r|\r).
\end{equation}
By definition of $A_{h,a,b}(\kappa)$, and since $c_\kappa>2 a_\kappa$ for all $\kappa$ large enough,
the quantity in \eref{eq:11-24} is bounded by 
\[
\sup_{h\leq x\sqrt{\nk} \atop{\bm \in A_{h,a,b}(\kappa)}}
 `P\l(\l|\sum_{j\ge 1} m_j\frac{j-1}2-\sum_{j\geq 1}\sum_{k=1}^{m_j} U^{(k)}_j\r|\geq \frac{c_\kappa}2\r).
\]

Now, since all the random variables $U_j^{(k)}$, $j\ge 1$, $k=1,\dots, m_j$ are symmetric about their respective mean $(j-1)/2$, one obtains using Chernoff's bounding method
\begin{align}\label{eq:bigbound}
`P\l(\l|\sum_{j\ge 1} m_j\frac{j-1}2-\sum_{j\geq 1}\sum_{k=1}^{m_j} U^{(k)}_j\r|\geq \frac{c_\kappa}2\r)\nonumber
&\leq2\inf_{t\geq 0}e^{-tc_\kappa/2}\E{e^{t\sum_{j\geq 1}\sum_{k=1}^{m_j}\l(U^{(k)}_j-\frac{(j-1)}{2}\r)}}\\
&=2\inf_{t\geq 0}e^{-tc_\kappa/2}\prod_{j\geq 1}\l(\frac{\sinh(tj/2)}{j\sinh(t/2)}\r)^{m_j}\nonumber\\
&\le 2\inf_{t\geq 0}\exp\l(-t\frac{c_\kappa}2+\sum_{j\geq 1}m_j\l(\frac{j^2t^2}{24}-\frac{t^2}{24}+\frac{t^4}{2880}\r)\r)\nonumber\\
&\le 2\inf_{t\in (0,1)}\exp\l(-t\frac{c_\kappa}2+\sum_{j\geq 1}m_j\l(\frac{j^2t^2}{24}-\frac{t^2}{48}\r)\r)\\
&\le 2\inf_{t\in (0,1)}\exp\l(-t\frac{c_\kappa}2+\frac{t^2}{24}\sum_{j\geq 1}m_j j^2\r).\nonumber
\end{align}
Here the third line follows from the bounds $\log(\sinh(s))\leq \log(s)+s^2/6$ and $\log(\sinh(s))\geq \log(s)+s^2/6-s^4/180$ valid for $s\geq 0$. Finally, we obtain
\begin{align*}
\sup_{h\leq x\sqrt{\nk} \atop{\bm \in A_{h,a,b}(\kappa)}}
	 `P\l(\l|\sum_{j\ge 1} m_j\frac{j-1}2-\sum_{j\geq 1}\sum_{k=1}^{m_j} U^{(k)}_j\r|\geq \frac{c_\kappa}2\r)
&\le 2\inf_{t\in(0,1)}\exp\l(-t\frac{c_\kappa}2+ \frac{t^2 b_\kappa}{24}\r)\\
&\le 2 e^{-3c_\kappa^2/(2b_\kappa)},
\end{align*}
upon choosing $t=6c_\kappa/b_\kappa$, which is indeed in $(0,1)$ for $\kappa$ large enough (we restricted the range of $t$ in \eqref{eq:bigbound}). This completes the proof since $3c_\kappa^2/(2b_\kappa)=3\varepsilon_\kappa^{-3/4}/2\ge \varepsilon_\kappa^{-1/2}$, for all $\kappa$ large enough.

\section{The limit of rescaled Galton--Watson trees: Proof of Proposition~\ref{pro:Ald}}
\label{sec:pt}

{ Consider the family tree of a Galton-Watson tree $\bt$ with offspring distribution $\mu=(\mu_i,i\geq 0)$ starting with one individual. Let $`P_\mu$ be the probability distribution of $\bt$.}
Denote by  $\wh{\ds}_\bt:=(\wh{n}_i(\bt),i\geq 0)$ the empirical degree sequence of $\bt$, let {
\begin{align*}
\wh{\mu}_i &= \wh{n}_i(\bt)/|\bt|,\\
\wh{\sigma}^2&=\sum_{i\ge 0} i^2\frac{\wh{n}_{i}(\bt)}{|\bt|-1}-1\\
\wh{\Delta} &=\max\{i:\wh{n}_i>0\}.
\end{align*}
Note that $\wh{\sigma}^2$ is not the variance of the empirical distribution $(\wh{\mu}_i, i\ge 0)$ but has been chosen to be consistent with the definition of $\sigma_{\ds(\kappa)}^2$ in \eqref{eq:sigma}. Write $`P_\mu^n(\,\cdot\,)=`P_\mu(\,\cdot\,|\,|\bt|=n)$.} In what follows, all the assertions containing `` $`P_\mu^n$'' are to be understood ``for $n$ such that $`P_\mu(|\bt|=n)>0$''; similarly, the limit with respect to  $`P_\mu^n$ are to be understood in the same manner, along subsequences included in $\{n: `P_\mu(|\bt|=n)>0\}$.
\begin{lem}\label{lem:GW}
Assume that $\mu$ has mean 1 and variance $\sigma^2_\mu\in(0,+\infty)$. Then 
under  $`P_\mu^n$,
\begin{equation}\label{eq:cv-tr}
(\hat{\mu},\wh{\sigma}^2,{\wh{\Delta}}/{\sqrt{n}})\dd (\mu,\sigma^2_\mu, 0),
\end{equation}
where the convergence holds in the space ${\cal M}(`N)\times `R\times `R$ equipped with the product topology.
\end{lem}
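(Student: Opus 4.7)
The plan is to reduce everything to statements about an i.i.d.\ sample with law $\mu$ conditioned on the value of its sum, and then to combine Gnedenko's local limit theorem with standard concentration inequalities. Write $\xi_1,\dots,\xi_n$ for the offspring counts of $\bt$ listed in depth-first order; under $\mathbb{P}_\mu$ these are i.i.d.\ with law $\mu$, and the event $\{|\bt|=n\}$ coincides with $\{S_k\ge 0\text{ for }k<n,\,S_n=-1\}$ for $S_k:=\sum_{i\le k}(\xi_i-1)$. Each of $\wh\mu$, $\wh\sigma^2$ and $\wh\Delta$ depends only on the multiset $\{\xi_1,\dots,\xi_n\}$ and so is invariant under cyclic shifts; because no sequence with $\sum_i\xi_i=n-1$ can have a nontrivial period (else $(n-1)d/n\in\mathbb{Z}$ for some proper divisor $d\mid n$, contradicting $\gcd(n,n-1)=1$), the cycle lemma \cite{DvMo1947a} implies that every shift class has size exactly $n$ and contains exactly one tree walk. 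Hence the joint law of $(\wh\mu,\wh\sigma^2,\wh\Delta)$ under $\mathbb{P}_\mu^n$ equals that under the simpler conditional law $\mathbb{Q}_n:=\mathbb{P}(\,\cdot\mid \sum_i\xi_i=n-1)$, and since the limit is deterministic it suffices to prove marginal convergence in probability under $\mathbb{Q}_n$.

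The basic quantitative tool is Gnedenko's local limit theorem: since $\mu$ has mean $1$ and variance $\sigma_\mu^2\in(0,\infty)$, one has $\mathbb{P}(\sum_i\xi_i=n-1)\ge c/\sqrt n$ along the admissible subsequence and, uniformly in the target, $\sup_\ell\mathbb{P}(\sum_i\xi_i=\ell)\le C/\sqrt n$. Combining these yields the absolute-continuity bound $\mathbb{Q}_n(A)\le C'\sqrt n\,\mathbb{P}(A)$ for any event $A$, which I would use repeatedly to transport unconditional tail bounds into the conditioned setting.

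For the first coordinate, fix $i\ge 0$: $\wh\mu_i$ is the empirical mean of the bounded indicators $\mathbf{1}_{\{\xi_j=i\}}$, and Hoeffding's inequality yields $\mathbb{P}(|\wh\mu_i-\mu_i|>\epsilon)\le 2e^{-2\epsilon^2 n}$, which easily beats the $\sqrt n$ factor. For the third coordinate, a union bound followed by conditioning on $\xi_1$ and applying the uniform local-CLT bound gives the ``one-big-jump'' estimate
$$\mathbb{P}\!\Bigl(\max_j\xi_j\ge\delta\sqrt n,\,\sum_i\xi_i=n-1\Bigr)\le n\,\mathbb{P}\!\Bigl(\xi_1\ge\delta\sqrt n,\,\sum_i\xi_i=n-1\Bigr)\le\frac{Cn}{\sqrt n}\,\mathbb{P}(\xi_1\ge\delta\sqrt n).$$
Dividing by $\mathbb{P}(\sum_i\xi_i=n-1)\ge c/\sqrt n$ yields $\mathbb{Q}_n(\wh\Delta\ge\delta\sqrt n)\le C''n\,\mathbb{P}(\xi_1\ge\delta\sqrt n)$, which is $o(1)$ because $\mathbb{E}[\xi_1^2]<\infty$ forces $\mathbb{P}(\xi_1\ge\delta\sqrt n)=o(1/n)$ by dominated convergence.

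The main obstacle is the second coordinate, $\wh\sigma^2\to\sigma_\mu^2$, equivalently $\frac1n\sum_j\xi_j^2\to\mathbb{E}\xi^2$ in $\mathbb{Q}_n$-probability, since only two moments of $\mu$ are available and Hoeffding cannot be applied to $\xi_j^2$ directly. I would handle this by truncation: fix $M$ and split $\xi_j^2=\xi_j^2\mathbf{1}_{\{\xi_j\le M\}}+\xi_j^2\mathbf{1}_{\{\xi_j>M\}}$. The bounded part is an i.i.d.\ sum of variables in $[0,M^2]$, for which Hoeffding supplies exponential concentration around $\mathbb{E}[\xi^2\mathbf{1}_{\{\xi\le M\}}]$ that again beats the $\sqrt n$ factor. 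For the tail, the same size-biasing as for $\wh\Delta$ gives
$$\mathbb{E}_{\mathbb{Q}_n}\!\Bigl[\tfrac1n\sum_j\xi_j^2\mathbf{1}_{\{\xi_j>M\}}\Bigr]\le C'''\,\mathbb{E}[\xi^2\mathbf{1}_{\{\xi>M\}}],$$
which tends to $0$ as $M\to\infty$; a standard ``choose $M$ large first, then $n$ large'' Markov argument then closes the proof. Assembling the three marginal limits yields the claimed joint convergence in $\mathcal{M}(\mathbb{N})\times\mathbb{R}\times\mathbb{R}$.
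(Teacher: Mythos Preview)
Your argument is correct and is a genuine alternative to the paper's. Both proofs begin the same way, using the cycle lemma to reduce from $`P_\mu^n$ to the bridge law $\mathbb{Q}_n=`P(\,\cdot\mid \sum_j\xi_j=n-1)$ on an i.i.d.\ sample. The divergence is in how the conditioning is handled. The paper restricts attention to events measurable with respect to the first $\lfloor n/2\rfloor$ increments and, via the Markov property together with the local limit theorem, obtains a \emph{constant} absolute-continuity factor $`P(B\mid W_n=-1)\le c\,`P(B)$ for such $B$; this lets the bare weak law of large numbers do all the work (for $\wh\mu_i$, for $\wh\sigma^2$, and for $\wh\Delta$), and exchangeability carries the conclusion to the second half. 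You instead accept the cruder global bound $\mathbb{Q}_n(A)\le C'\sqrt n\,`P(A)$ and then pay for the $\sqrt n$ with quantitative concentration: Hoeffding for the bounded statistics $\wh\mu_i$ and the truncated part of $\frac1n\sum_j\xi_j^2$, and a direct size-biasing computation (conditioning on $\xi_1$ and using the uniform concentration-function bound $\sup_\ell`P(\sum_{j\ge2}\xi_j=\ell)\le C/\sqrt n$) for $\wh\Delta$ and for the tail $\frac1n\sum_j\xi_j^2\mathbf{1}_{\{\xi_j>M\}}$. The paper's route is slicker---no truncation is needed for $\wh\sigma^2$ because the constant factor makes the ordinary LLN sufficient---while your route is more self-contained and avoids the half-walk trick at the cost of an extra $\epsilon/3$-style truncation argument. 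Both are perfectly valid under the sole assumption $\sigma_\mu^2<\infty$.
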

{ In this lemma,} ${\cal M}(`N)$ is the set of probability measures on $`N$. The topology on ${\cal M}(`N)$ is metrizable, for example, by the distance 
\[D(\nu,\nu')=\sum_{i\geq 0} { \frac{1}{2^i}}d_{\textup{TV}}(\nu[i],\nu'[i])\]
where $\nu[i]$ is the distribution of the $i$th first marginals under $\nu$ and $d_{\textup{TV}}$ is the distance in total variation. Since here the limit is the deterministic measure $\mu$, it suffices to show that, for all $i$,  $\hat{\mu}_i\to \mu_i$ in probability as $n\to\infty$. With $D$ it is easy to construct a metric on ${\cal M}(`N)\times `R\times `R$ making of this space a Polish space. Hence, by the Skohorod theorem there exists a probability space where versions of $(\hat{\mu},\wh{\sigma}^2,{\wh{\Delta}}/{\sqrt{n}})$ under $`P_\mu^n$ converges almost surely to $(\mu,\sigma^2_\mu, 0)$. So on the conditional space, the hypotheses of Theorem~\ref{thm:main_gh} hold almost surely, and then its conclusion, which is a limit in distribution, also holds. { Of course, we do not mean that any sequence of trees for which the degree distribution satisfies the conditions of Theorem~\ref{thm:main_gh} converges to the continuum random tree; one also needs that for any fixed $\kappa$, conditional on the degree sequence $\ds$ the trees are distributed according to $`P_\ds$. This fact certainly holds for conditioned Galton--Watson trees: under $`P_\mu$ all trees with the same degree sequence occur with the same probability, and conditional on its degree sequence $\ds$, a Galton--Watson tree is precisely distributed according to $`P_{\ds}$. To summarise, to prove Proposition~\ref{pro:Ald} it suffices to prove Lemma~\ref{lem:GW}.}

\begin{proof}[Proof of Lemma \ref{lem:GW}]
The claim is about properties of the degree sequence of Galton--Watson trees conditioned on their total progeny. We first provide a way to construct the degree sequence. Consider the \LL walk $S_n$ associated with a tree $\bt$ under $`P_\mu^n$; the degree sequence of the tree $\bt$ is essentially (just shift by one) the empirical distribution of the increments of $S_n$. More precisely, consider first a random walk $W=(W_k,k=0,\dots,n)$, with i.i.d.\ increments $X_k=W_k-W_{k-1},k=1,\dots,n$ with distribution \[\nu_i=`P(X_k=i)=\mu_{i+1}\quad i\geq -1;\]
then $S=(S_0,\dots,S_n)$ is distributed as $W$ conditioned on
$W\in A^+_{-1}(n)$ where 
$$A^+_{-1}(n)=\{w=(w_0,\dots,w_n)~:~ w_0=0, w_k\geq 0, 1\le k <n,w_n=-1\}$$ is the set of discrete excursions of length $n$. 

Write $K_i=\#\{ k : X_k=i-1 \}$, and $K=(K_i,i\geq 0)$. Then, if $W\in A^+_{-1}(n)$, the sequence $K=(K_i, i\ge 0)$ is distributed as the degree sequence of a tree under $`P_\mu^n$. In other words, we have
\[`P(K\in B~|~W\in A^+_{-1}(n))=`P_{\mu}^n((\wh{n_i}(\bt),i\geq 0)\in B ).\] 
By the rotation principle, we may remove the positivity condition :
\[`P(K\in B~|~W_n=-1)=`P_{\mu}^n((\wh{n_i}(\bt),i\geq 0)\in B ).\] 

Our aim is now to show that the condition that $W$ is a bridge imposed by $W_n=-1$ does not completely wreck the properties of $W$ in the following sense:  let ${\cal F}_k=\sigma(W_0,\dots,W_k)$ be the $\sigma$-field generated by the $k$ first $W_i$; then there exists a constant $c\in (0,\infty)$ such that for any $n$ large enough, and for any event $B\in {\cal F}_{\floor{n/2}}$ one has
\begin{equation}\label{eq:absolute_cont}`P(B ~|~ W_n=-1) \leq c\, `P(B).\end{equation}
That is: any event $B$ in ${\cal F}_{\floor{n/2}}$ with a very small probability for a standard (unconditioned) random walk also has a small probability in the bridge case (conditional on $W_n=-1$). 
The argument proving this claim is given in \citet{JM}, page 662 and goes as follows: 
\begin{align*}
`P(B\,|\,W_{n}=-1)
&=\sum_{x} `P(B\,|\,W_{\floor{n/2}}=x,W_{n}=-1) \cdot \frac{`P(W_{\floor{n/2}}=x,W_{n}=-1)}{`P(W_n=-1)}\\
&=\sum_{x} `P(B\, |\, W_{\floor{n/2}}=x)`P(W_{\floor{n/2}}=x)\cdot \frac{`P(W_{n-\floor{n/2}}=-x-1)}{`P(W_n=-1)}.
\end{align*}
It then suffices to (a) observe that $\sup_x `P(W_{n-\floor{n/2}}=-x-1)\le c/\sqrt{n}$ for some constant $c_1\in (0,\infty)$ \cite[][Theorem 2.2 p. 76]{PET1995}, and (b) use a local limit theorem to show that $`P(W_n=-1)\ge c_2 \sqrt n$, for some constant $c_2\in (0,\infty)$ and all $n$ large enough  \cite[page 233]{GK-54}. This gives the result in \eqref{eq:absolute_cont} with $c=c_1/c_2$.

Now using that the increments $(X_1,\dots,X_n)$ under $`P(\,\cdot\,| \, W_n=-1)$ are exchangeable, any concentration principle for the first half of them easily extends to the second half (the easy details are omitted). Consider the degree sequence induced by the first half of the walk:
let $K^{1/2}_i=\#\{ k : X_k=i-1, k \leq \floor{n/2} \}$, and note that the $K_i^{1/2}$ are  ${\cal F}_{\floor{n/2}}$-measurable.
For $W$ (that is, with no conditioning), we have
\bq\label{ec:cvsigma_mu}
\frac 1 {\floor{n/2}} \sum_{i\ge 0} K^{1/2}_i(i-1)^2=\frac{1}{\floor{n/2}} \sum_{j=1}^{\floor{n/2}} X_j^2\xrightarrow[n\to\infty]{} \Ec{X_1^2}=\sigma_\mu^2
\eq
by the law of large number, since $X_i$ owns a (finite) moment of order 2. 
Hence, for any  $`e>0$, writing
\[Ev(`e)= \l\{ \l|\frac 1{\floor{n/2}}\sum_{i\ge 0} K^{1/2}_i(i-1)^2-\sigma_\mu^2 \r|\geq `e \r\},\]
we have
$`P(Ev(`e))\to 0$ and thus, according to the bound in \eqref{eq:absolute_cont}, $`P(Ev(`e) |W_n=-1)\to 0$, as $n\to\infty$. Using the argument twice (one for each half of the walk) yields convergence $\wh{\sigma}^2\to \sigma_\mu^2$ in probability as $n\to\infty$.

The same argument also proves that 
$$`P\l( \l|\frac{K^{1/2}_i}{\floor{n/2}}-\mu_i\r|\geq `e~\Bigg|\,W_n=-1\r)\to 0,$$ 
which yields $\wh{\mu_i}\to \mu_i$ in probability.

The fact that $\wh{\Delta}=o(\sqrt{n})$ (in probability) under $`P_\mu^n$ is also a consequence of the convergence of the sum given in \eref{ec:cvsigma_mu}. To see this, let $C(`a)= \{k ~:~`P(X_1^2\geq k)\geq `a/k\}$. Since $\Ec{X_1^2}=\sum_{k\ge 0} `P(X_1^2\geq k)<+\infty$, then $k`P(X_1^2\geq k)\to 0$ , entailing  $\#C(`a)<+\infty$ for any $\alpha>0$. In particular, for any $`e>0$, 
$$\#\{n : n`P(X_1^2\geq `e n)\geq `a/`e\}<+\infty.$$ Taking $`a=`e`e'$, one obtains that $\#\{n : n`P(X_1^2\geq `e n)\geq `e'\}<+\infty$, which implies that
$$`P(\max \{X_i:i\le n/2\}\geq `e \sqrt n)\le n `P(X_1^2\ge `e n)\xrightarrow [n\to\infty]{} 0.$$ So under the unconditioned law one has $\wh\Delta = o(\sqrt n)$; we complete the proof using the bound in \eqref{eq:absolute_cont}.
\end{proof}

\section{Application to constrained coalescing processes}\label{sec:coagulation}

In this final section, we discuss an application of Theorem~\ref{thm:main_gh} to a coalescence process with particles having constrained valences.

The famous additive coalescent \cite{AlPi1998a,Bertoin2000a, Bertoin2006, Pitman1999b,Pitman2006} can be seen as arising from the following natural microscopic description. Consider a set of $n$ distinct particles $\{1,2,\dots, n\}$. The particles are initially free, and form $n$ clusters; the clusters are organised as rooted trees. The clusters merge according to the following dynamics. At each step, choose a particle $u$ uniformly at random; it belongs to some cluster $T$ rooted at $r$. Choose uniformly a second cluster $T'\neq T$, with root $r'$. Add an edge between $r'$ and $u$ to obtain a new cluster rooted at $r$. At each step, the system consists of a forest of general rooted labelled trees (an acyclic graph on $\{1,2,\dots, n\}$ with a distinguished node per connected component). The process stops after $n-1$ steps, when the system consists of a single rooted labelled tree. The final tree is then uniform among all rooted labelled trees. 

One can similarly define a system of coalescing particles where the degrees would be constrained. Different algorithms might be used, depending on the precise way the uniform choices are made, that yield \emph{a priori} different trees.

\medskip
\noindent\textsc{Labelled particles.} Consider the set of particles $\{1,2,\dots, n\}$, and a set of degrees $c_1\le c_2\le\dots\le c_n$. Write $\ds=(n_i, i\ge 0)$ for the associated degree sequence, $n_i=\#\{j: c_j=i\}$. Assign randomly the particles a degree. For instance, this can be done using a random permutation $\sigma=(\sigma(1),\dots,\sigma(n))$ of $\{1,2,\dots, n\}$ and assigning degree $c_{\sigma(i)}$ to particle $i$. Think now of the particle $i$ as initially having edges to $c_{\sigma(i)}$ free slots that can each contain a single particle. The particles will now merge to form clusters. Each cluster is represented by a tree with a distinguished vertex (the root). Initially, each particle sits in a tree containing a single node (which is then also the root). Proceed with the following algorithm to merge the particles, as long as there are free slots left:
\begin{itemize}
	\item Pick a free slot $s$ uniformly at random; say it is bound to particle $p$ lying in the cluster rooted at $r$.
	\item Pick \emph{another} cluster, uniformly at random, rooted at some node $r'$.
	\item Merge the two clusters by assigning $r'$ to the free slot $s$; this creates an edge between the particles $p$ and $r'$, and removes the slot $s$ from the set of free slots. The new cluster is rooted at $r$.
\end{itemize}
At every iteration, precisely one slot is filled and the process stops after $n-1$ steps. The process yields a random tree \emph{labelled} tree $T_n^L$. 

The labelled tree $T_n^L$ is uniform in the set of labelled trees having the same specified degree sequence. To see this, just consider the encoding of the process by the final labelled tree, together with a labelling of the edge indicating their order of appearance. At iteration $i\in \{1,\dots, n-1\}$, there are $n-i$ free slots left and $n-i+1$ connected components, so that the probability that any couple free slot/other connected component is precisely 
$$\frac 1 {(n-i)^2}.$$
Overall, the probability to obtain any particular pairing free slots/particles together with a history is
$$\prod_{i=1}^n \frac 1{(n-i)^2}=\frac 1 {(n-1)!^2}.$$

The same particle adjacency ---hence the same labelled tree--- is obtained by the $\prod_{j=1}^n c_j!$ ways to pair the free slots with particles; and for any labelled tree there are exactly $(n-1)!$ distinct histories. Finally, among the $n!$ ways to assign the labels to particles in the first place, $\prod_{i\ge 0}n_i!$ correspond to the degree/label pattern of the tree, it follows that the probability of seeing any labelled tree after $n-1$ iterations is precisely
\begin{equation}\label{eq:coalescing_labelled}
\frac{\prod_{i\ge 0}n_i!}{n!} \times \frac 1 {(n-1)!^2} \times (n-1)! \times \prod_{i=1}^n c_i! = \frac{\prod_{i\ge 0} i!^{n_i}}{(n-1)!} \times \binom{n}{(n_i, i\ge 0)}^{-1},
\end{equation}
which depends only on the degree sequence, so that trees with the same degree sequence are chosen uniformly. (This is also, as it should, the inverse of the number of labelled trees with degree sequence given by $\ds=(n_i, i\ge 0)$ \cite[][Example 6.2.2]{Pitman2006}.)

\medskip
\noindent\textsc{Unlabelled particles.} Consider a degree sequence in the form of $\ds=(n_i, i\ge 0)$ where $n_i$ denotes the number of nodes of degree $i$. For $c_1\le c_2\le \dots \le c_n$ of size $n$. So $\sum_{i\ge 0} c_i=n-1$. As before, we think of the particles as having empty slots, but since there are no labels, we impose that the slots of any given particle be ordered. The particles then merge according to the same algorithm, in order to distinguish particles use the canonical labelling giving label $i$ to the particle with degree $c_i$. After forgetting the canonical labelling, the process yields a plane tree $T_n$. 

Again, the plane tree $T_n$ is uniform among all plane trees with the correct degree sequence. The arguments are similar, only simpler, to those we used in the labelled case. Since, for a given plane tree, there are $\prod_{i\ge 0} n_i!$ ways to assign the canonical labels to the nodes, the probability to obtain any given plane tree is 
$$\prod_{i\ge 0} n_i! \times \frac 1 {(n-1)!^2} \times (n-1)! = n \binom{n}{(n_i, i\ge 0)}^{-1}$$

\medskip
In these coalescing particle systems, one of the parameters of interest is the metric structure of the cluster (structure of the ``molecule'') eventually obtained after all particles have coalesced into a single component. In the unrestricted case, the metric structure is described by the CRT of Aldous. Our result shows that the quenched version, conditional on the degree sequence, is also valid under reasonable conditions on the degree sequence imposed. Results for Galton--Watson trees conditioned on the size only are recovered by sampling the degree sequence. 

For instance, to recover the unrestricted version of the merging process, one can sample $n$ independent Poisson(1) random variables, and keep them if their sum equals $n-1$; the $n$ exchangeable values obtained are then the degrees $C_1,C_2,\dots, C_n$ of the $n$ particles.

\subsection*{Acknowledgements }
{  We are grateful to the referees for the many relevant remarks they made on the paper.}
{
\setlength{\bibsep}{.2em}
\small
\bibliographystyle{abbrvnat}
\bibliography{bibfile,bib_nic}
}

\end{document}